\documentclass[12pt]{article}
\usepackage{amsmath,amsthm,amssymb,latexsym,color}
\voffset-2.5 cm
\hoffset -1.5 cm
\textwidth 16 cm
\textheight 23 cm
\thispagestyle{empty}

\theoremstyle{plain}
\newtheorem{theor}{Theorem}
\theoremstyle{remark}
\newtheorem{rem}{Remark}
\theoremstyle{plain}
\newtheorem{prop}[theor]{Proposition}
\newtheorem{cor}[theor]{Corollary}
\newtheorem{lemma}[theor]{Lemma}

\def\R{{\mathbb R}}
\def\P{{\mathbb P}}
\def\N{{\mathbb N}}

\def\Net{{\mathcal N}}

\def\dist{{\rm d}}
\def\col{{\rm col}}
\def\Exp{{\mathbb E}}

\def\supp{{\rm supp}}

\def\Proj{{\rm Proj}}
\def\spn{{\rm span}}

\def\concf{{\mathcal Q}}
\def\regm{{D_1}}
\def\irregm{{D_{2}}}
\def\mofones{{\bf 1_{N\times n}}}
\def\Gras{{\rm Gr}}

\newcommand{\trunk}[2]{{#1}_{#2}}
\newcommand{\thesubspace}[4]{V_{#1,#2}(#3,#4)}
\newcommand{\thesubspaceshort}[2]{V(#1,#2)}

\begin{document}

\title{The smallest singular value of random rectangular matrices
with no moment assumptions on entries}
\author{Konstantin E. Tikhomirov, University of Alberta, Canada}

\maketitle

\begin{abstract}
Let $\delta>1$ and $\beta>0$ be some real numbers.
We prove that there are positive $u,v,N_0$ depending only on $\beta$ and $\delta$
with the following property: for any $N,n$ such that $N\ge \max(N_0,\delta n)$, any
$N\times n$ random matrix $A=(a_{ij})$ with i.i.d.\ entries satisfying
$\sup\limits_{\lambda\in {\mathbb R}}{\mathbb P}\bigl\{|a_{11}-\lambda|\le 1\bigr\}\le 1-\beta$ and
any non-random $N\times n$ matrix $B$,
the smallest singular value $s_n$ of $A+B$ satisfies
${\mathbb P}\bigl\{s_n(A+B)\le u\sqrt{N}\bigr\}\le \exp(-vN)$.
The result holds without any moment assumptions on distribution of the entries of $A$.
\end{abstract}

\section{Introduction}

In last years, spectral properties of random matrices with fixed dimensions
(the corresponding theory is often called {\it non-asymptotic})
have attracted considerable attention
of researchers, whose efforts have been mostly concentrated on studying distributions of the largest and the smallest singular values.
For detailed information on the development of the subject, we refer the reader to surveys \cite{RV_CONGRESS}, \cite{V}.

Let $N\ge n$.
Given an $N\times n$ random matrix $A$, we employ a usual notation
$s_1(A):=\max\limits_{y\in S^{n-1}}\|Ay\|$; $s_n(A):=\inf\limits_{y\in S^{n-1}}\|Ay\|$.
A limiting result of Z.D.~Bai and Y.Q.~Yin \cite{BY} suggests that for an $N\times n$ matrix with i.i.d.\ mean zero entries
with unit variance and a finite fourth moment, its largest and smallest singular values
should ``concentrate'' near $\sqrt{N}+\sqrt{n}$ and $\sqrt{N}-\sqrt{n}$, respectively.
In the non-asymptotic setting one is interested, in particular,
in finding the weakest possible conditions on random matrices that would imply
$s_1\lesssim \sqrt{N}+\sqrt{n}$ and $s_n\gtrsim \sqrt{N}-\sqrt{n}$ with a large probability.

For a random $N\times n$ matrix $A$ with i.i.d.\ mean zero subgaussian entries, 
an elementary application of the standard $\varepsilon$-net argument
yields $s_1(A)\lesssim \sqrt{N}+\sqrt{n}$ with an overwhelming probability.
Distribution of the smallest singular value when $N\approx n$ requires a more delicate analysis.
A.~Litvak, A.~Pajor, M.~Rudelson and N.~Tomczak-Jaegermann showed in \cite{LPRT}
that if $N$ and $n$ satisfy $N/n\ge 1+c_1(\ln N)^{-1}$ then $\P\{s_n(A)\le c_2\sqrt{N}\}\le\exp(-c_3N)$,
where $c_1,c_3$ depend only on variance and subgaussian moment and $c_2$ --- on the moments and the aspect ratio $N/n$.
The approach initiated in \cite{LPRT} was further developed by M.~Rudelson and R.~Vershynin who
combined it with certain Littlewood--Offord-type theorems.
In \cite{RV_SQUARE}, Rudelson and Vershynin treated square matrices and later in \cite{RV_RECT} ---
rectangular matrices with arbitrary aspect ratio and i.i.d.\ mean zero subgaussian entries, thereby
sharpening and generalizing the result of \cite{LPRT}.
We note that the Littlewood--Offord theory has gained an important role in the study of random
matrices primarily due to T.~Tao and V.~Vu
(see, in particular, \cite{TV_ANNMATH}).

Various estimates for the extremal singular values were obtained when studying the problem of
approximating covariance matrix of a random vector by the empirical covariance matrix.
Answering a question of R.~Kannan, L.~Lov\'asz and M.~Simonovits, the authors of \cite{ALPT} 
treated log-concave random vectors.
Later, the log-concavity was replaced by weaker assumptions
(see, in particular, \cite{ALPT2}, \cite{SV}, \cite{MP}, \cite{GLPT}).

Recently, it has become apparent that different conditions are required to bound the largest and
the smallest singular value, and these two questions should be handled separately.
One of results proved by N.~Srivastava and R.~Vershynin in \cite{SV} provides a lower estimate
for the second moment of $s_n(A)$, where $A$ is an $N\times n$
matrix with independent isotropic rows satisfying a $(2+\varepsilon)$-moment condition
and certain assumptions on the aspect ratio $N/n$.
It is important to note that the conditions imposed on $A$ are too weak to imply the
``usual'' upper bound $s_1(A)\lesssim\sqrt{N}$ with a large probability \cite{LS}.
This result of \cite{SV} was strengthened by V.~Koltchinskii and S.~Mendelson in \cite{KM} under similar assumptions on the matrix.
Another theorem of \cite{KM} states the following:
given an $n$-dimensional isotropic random vector $X$ satifying
$\inf\limits_{y\in S^{n-1}}\P\{|\langle X,y\rangle|\ge \alpha\}\ge \beta$ for some $\alpha,\beta>0$, there are $C_1,c_2,c_3>0$ depending only
on $\alpha,\beta$ such that for $N\ge C_1 n$ and the $N\times n$ random matrix
$A$ with i.i.d.\ rows distributed like $X$, one has $\P\{s_n(A)\ge c_2\sqrt{N}\}\ge 1-\exp(-c_3 N)$.
Some further improvements of the estimates have been obtained in \cite{Yaskov}.

The assumption of isotropicity of a random vector or, more generally, boundedness of variance of its coordinates
is quite natural and appears as part of requirements on a matrix' rows in all the aforementioned papers.
However, for a deeper understanding of non-asymptotic characteristics of random matrices,
an important question is {\it whether any moment assumptions on entries are really necessary}
in order to get satisfactory lower estimates for the smallest singular value.

Unlike in \cite{SV} and \cite{KM} where the matrix entries within a given row are not necessarily independent,
in our paper we consider the classical setting when a rectangular matrix has i.i.d.\ entries.
However, in contrast with all the mentioned results, the lower estimate for the smallest singular value that we prove does not use any moment assumptions;
the only requirement is that the distribution of entries satisfies a ``spreading'' condition given in terms of
the Levy concentration function.
Moreover, compared to \cite{SV} and \cite{KM}, we significantly relax the assumptions on the aspect ratio of the matrix.

Given a real random variable $\xi$, {\it the concentration function} of $\xi$ is defined as
$$\concf(\xi,\alpha)=\sup\limits_{\lambda\in\R}\P\bigl\{|\xi-\lambda|\le\alpha\bigr\},\;\;\alpha\ge 0.$$
The notion of the concentration function was introduced by P.~Levy \cite{Levy}
in context of studying distributions of sums of random variables.
Note that, for a random variable $\xi$ with zero median satisfying $\Exp|\xi|^{p}\ge m$, $\Exp|\xi|^q\le M$
for some $0<p<q$, $m,M>0$, necessarily $\concf(\xi,\alpha)\le 1-\beta$ for some $\alpha,\beta>0$ depending
only on $p,q,m,M$. At the same time, the condition $\concf(\xi,\alpha)\le 1-\beta$ for some $\alpha,\beta>0$
does not imply any upper bounds on positive moments of $\xi$.

The main result of our paper is the following theorem:

\begin{theor}\label{ssv theor nonsym}
For any real $\beta>0$ and $\delta>1$ there are $u,v>0$ and $N_0\in\N$ depending only on $\beta$ and $\delta$
with the following property:
Let $N,n\in\N$ satisfy $N\ge \max(N_0,\delta n)$; $A=(a_{ij})$ be an
$N\times n$ random matrix with i.i.d.\ entries, such that for some $\alpha>0$ the concentration function of the entries satisfies
\begin{equation}\label{the condition nonsym}
\concf(a_{11},\alpha)\le 1-\beta.
\end{equation}
Then for any non-random $N\times n$ matrix $B$ we have
\begin{equation}\label{the conclusion nonsym}
\P\bigl\{s_n(A+B)\le \alpha u\sqrt{N}\bigr\}\le \exp(-vN).
\end{equation}
\end{theor}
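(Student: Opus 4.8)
The plan is to work up to the homogeneity $s_n(A+B)=\alpha\,s_n(A/\alpha+B/\alpha)$, so one may assume $\alpha=1$, and then to split the randomness by a truncation. Fix a level $M=M(\beta,\delta)$ so large that $p:=\P\{|a_{11}|>M\}$ is below a small constant (chosen later), set $\mu:=\Exp[a_{11}\mathbf 1_{\{|a_{11}|\le M\}}]$ and write $a_{ij}=\mu+(\regm)_{ij}+(\irregm)_{ij}$, where the \emph{regular part} $\regm$ has i.i.d.\ entries $a_{ij}\mathbf 1_{\{|a_{ij}|\le M\}}-\mu$, bounded by $2M$, and the \emph{irregular part} $\irregm$ has i.i.d.\ entries $a_{ij}\mathbf 1_{\{|a_{ij}|>M\}}$, which vanish with probability $\ge 1-p$. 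Since $\mofones$ has rank one, one absorbs $\mu\mofones+B$ into the deterministic matrix, so it suffices to bound $s_n(\regm+\irregm+B)$ from below for an arbitrary non-random $B$. From \eqref{the condition nonsym} one reads off $\concf((\regm)_{11},1)\le 1-\beta/2$ once $p\le\beta/2$, and by Chebyshev $\Exp(\regm)_{11}^2$ is bounded below by a constant depending only on $\beta$; also, being a matrix with bounded centered i.i.d.\ entries, $\regm$ satisfies $\|\regm\|\le C M\sqrt N$ off an event of probability at most $e^{-cN}$.

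The core ingredient is a small-ball bound for a single direction. For fixed $y\in S^{n-1}$ the coordinates of $(\regm+\irregm+B)y$ are independent, each of the form (a deterministic shift) $+\langle(\regm)_i,y\rangle+\langle(\irregm)_i,y\rangle$; using $\concf((\regm)_{11},1)\le 1-\beta/2$ and a Lévy–Rogozin type tensorization together with a Chernoff bound on the number of coordinates of size $\ge t_0$, one gets $\P\bigl\{\|(\regm+\irregm+B)y\|\le t_0\sqrt N\bigr\}\le e^{-c_1N}$ with $t_0,c_1$ depending only on $\beta$. For a \emph{spread} $y$ (far from any sparse vector) one does better: an Esseen/Erdős–Littlewood–Offord estimate, exploiting the non-degeneracy of $(\regm)_{11}$, shows $\concf(\langle(\regm)_i,y\rangle,s)\le C(\beta)(s+n^{-1/2})$, hence $\P\bigl\{\|(\regm+\irregm+B)y\|\le s\sqrt N\bigr\}\le\bigl(C(\beta)(s+n^{-1/2})\bigr)^{N}$ for every $s>0$.

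Next I would split $S^{n-1}$ into \emph{compressible} vectors, those within Euclidean distance $\rho_0$ of a $\lceil\rho_0n\rceil$-sparse unit vector, and \emph{incompressible} ones, with $\rho_0=\rho_0(\beta)$ small. On the compressible set one uses a net of cardinality at most $\exp(C\rho_0\log(1/\rho_0)\,n)$, which for $\rho_0$ small enough is below $e^{c_1N/2}$ since $n\le N$; combined with the $e^{-c_1N}$ bound and the operator-norm control on $\regm$ this handles the infimum over compressible $y$ (up to the $\irregm$ issue flagged below). On the incompressible set one instead takes a net of a fixed mesh $\varepsilon_0$, of cardinality $(C/\varepsilon_0)^n$: here the sharper bound $\bigl(C(\beta)(s+n^{-1/2})\bigr)^{N}$ is what wins, because for $N\ge\delta n$ and $n\ge n_0(\beta,\delta)$ one has $N\log\bigl(C(\beta)(s+n^{-1/2})\bigr)+n\log(C/\varepsilon_0)<0$ after taking $s$ small — this logarithmic inequality is precisely the mechanism that allows the aspect ratio to be \emph{any} $\delta>1$. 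The remaining range $n<n_0$ is disposed of separately (there $N$ is still forced large and $S^{n-1}$ has bounded covering numbers).

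The main obstacle is the passage from a net to the whole sphere, since $\|\irregm\|$ is not controlled (its nonzero entries are unbounded), so the Lipschitz step $\|(\regm+\irregm+B)y\|\ge\|(\regm+\irregm+B)y'\|-\|\regm+\irregm+B\|\,\|y-y'\|$ breaks down for the $\irregm$ term. I would attack this through the sparsity of $\irregm$: further decompose $\irregm$ according to the magnitude of its entries, so that all but a very sparse residual is bounded (its operator norm being $\le C(\beta,\delta)\sqrt N$ with overwhelming probability, hence absorbable into the regular part), and for the residual condition on the positions of its few large entries, handling the peeled-off coordinates and rows they touch by the crude single-direction bound on a coarse net of those low-dimensional pieces — this is where the auxiliary subspace family $\thesubspace{a}{b}{\cdot}{\cdot}$ enters, isolating a complementary subspace on which only the bounded matrix acts. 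The hard part will be the bookkeeping: simultaneously calibrating the truncation level, the sparsity of $\irregm$, the compressible/incompressible threshold $\rho_0$, and the Grassmannian/coordinate-peeling nets so that every union bound stays subexponential against $e^{-c_1N}$, while $\irregm$ genuinely retains unbounded entries. Assembling the compressible and incompressible estimates then yields $\P\{s_n(\regm+\irregm+B)\le u\sqrt N\}\le e^{-vN}$, which is the theorem.
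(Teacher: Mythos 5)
Your overall architecture (truncate $A$ into a bounded regular part $\regm$ and an unbounded irregular part $\irregm$, split $S^{n-1}$ into near-sparse and spread vectors, combine anti-concentration with $\varepsilon$-nets) resembles the paper's in spirit, and you correctly flag the central obstacle: without moments, $\|\irregm\|$ is uncontrolled and the Lipschitz step on a net breaks for the irregular term. The gap is in your proposed fix. Decomposing $\irregm$ by entry magnitude cannot produce a ``bounded bulk of operator norm $\le C\sqrt{N}$ plus a very sparse residual'': each entry of $\irregm$ is nonzero with a fixed constant probability, so $\irregm$ typically has a constant fraction of its entries nonzero, and there is no tail decay on their size at all (take entries equal to $10^{100}$ with probability $1/2$ and $0$ otherwise, with $\alpha=1$, $\beta=1/2$). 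Any magnitude threshold either leaves an unbounded bulk or yields a norm bound scaling with an uncontrolled parameter, and the ensuing union-bound bookkeeping cannot close. Calling the remaining difficulty ``bookkeeping'' understates it --- the obstruction is structural, not combinatorial.

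The paper sidesteps the problem rather than fighting it. Proposition~\ref{gen spl proc} is a modified $\varepsilon$-net argument in which $\|\irregm\|$ never appears: for each sparse net point $y'$ with coordinate subspace $E_{y'}=\spn\{e_j\}_{j\in\supp y'}$, one has $\|Dy\|\ge\dist\bigl(\regm(\Proj_{E_{y'}}y),\,D(E_{y'}^\perp)+\irregm(E_{y'})\bigr)$, because $D(\Proj_{E_{y'}^\perp}y)$ and $\irregm(\Proj_{E_{y'}}y)$ both lie in the subspace on the right. The net need only approximate $\Proj_{E_{y'}}(y)$, and replacing it by $y'$ costs only $\varepsilon\|\regm\|$. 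The price is that the random vector $\trunk{A}{H}y'$ and the random subspace $\thesubspace{A}{B}{H}{E_{y'}}$ are dependent; Lemma~\ref{decoupl lemma} shows they become independent after conditioning on the full pattern of which entries of $A$ lie in $H$ versus $\overline{H}$, and this conditional independence is exactly what lets Corollary~\ref{RV extension} be applied slice by slice. Your instinct to ``condition on positions'' is in the right direction, but aimed at the wrong goal: the conditioning the paper needs is on the entire $H$/$\overline{H}$ membership pattern of every entry, in order to restore independence, not on a sparse set of outliers in order to gain a norm bound.
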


Adding a non-random component $B$ in the theorem does not increase complexity of the proof;
on the other hand, it demonstrates ``shift-invariance'' of the lower estimate.
Note that the problem of estimating the smallest singular value
of non-random shifts of {\it square} matrices is important
in analysis of algorithms
\cite{SST}, \cite{ST}, \cite{TV_MC}, \cite{TV_STOC}.

It is easy to see that a restriction of type \eqref{the condition nonsym}
is {\it necessary} for \eqref{the conclusion nonsym} to hold.
Indeed, suppose that for some $N\times n$ matrix $A$ with i.i.d.\ entries and some numbers
$u,v,\alpha>0$, \eqref{the conclusion nonsym} is true whenever $B=\lambda I$, $\lambda\in\R$.
Then, obviously,
$$\P\Bigl\{\sum\limits_{i=1}^N (a_{i1}-\lambda)^2\le \alpha^2u^2N\Bigr\}\le \exp(-vN),\;\;\lambda\in\R,$$
implying
$\concf(a_{11},\alpha u)=\sup\limits_{\lambda\in\R}\P\bigl\{|a_{11}-\lambda|\le \alpha u\bigr\}\le \exp(-v)$.

Our proof of Theorem~\ref{ssv theor nonsym} is based on two key elements: on a modification of a
standard $\varepsilon$-net argument for matrices (Proposition~\ref{gen spl proc})
and on estimates of the distance between
a random vector and a fixed linear subspace that follow from a result of
\cite{RV_SMALLBALL} (Theorem~\ref{RV conc lemma} and Corollary~\ref{RV extension} of our paper).
Our method is similar in many aspects to the approach developed in \cite{LPRT} and later in \cite{RV_RECT}, \cite{RV_SQUARE}.
In particular, as in the mentioned papers, we decompose the unit sphere $S^{n-1}$ into 
several subsets which are studied separately from one another.
On the other hand, our modification of the $\varepsilon$-net argument and its technical realization
in regard to splitting a random matrix into ``regular'' and ``non-regular'' parts
are apparently new.

We will discuss the main idea of the proof more concretely and in more detail
at the end of the next section, after we define notation and state the modified $\varepsilon$-net argument.

\section{Preliminaries}\label{prelim sec}

Throughout the text, $(\Omega,\Sigma,\P)$ denotes a probability space. Let $N\in \N$.
We say that a function $X:\Omega\to \R^N$ is a random vector in $\R^N$ if the preimage under $X$ of every Borel subset of $\R^N$
is $\P$-measurable. For any non-negative integer $n\le N$, let $\Gras(n,N)$ be the Grassmannian --- the set of all $n$-dimensional
subspaces of $\R^N$, equipped with the unique normalized rotation-invariant Borel measure (the Haar measure).
A function $V:\Omega\to \Gras(n,N)$ is an $n$-dimensional random subspace of $\R^N$
if the preimage under $V$ of every measurable subset of $\Gras(n,N)$ is $\P$-measurable.
It will be convenient for us to extend the last definition by allowing the subspace to have a
``variable'' dimension: Consider the disjoint union $\bigsqcup_{k=0}^n \Gras(k,N)$, with the measure
induced by the Haar measures on each $\Gras(k,N)$, $k=0,1,\dots,n$.
Then $V:\Omega\to \bigsqcup_{k=0}^n \Gras(k,N)$ is a random subspace of
$\R^N$ of dimension at most $n$ if the preimage under $V$ of every measurable subset of $\bigsqcup_{k=0}^n \Gras(k,N)$
is $\P$-measurable (an example of such a subspace in the text is $\thesubspace{A}{B}{H}{E}$
defined at the end of the section). We say that the random subspace $V$ and a random vector $X$ in $\R^N$ are independent
if for every Borel subset $K\subset\R^N$ and a measurable subset $K'\subset \bigsqcup_{k=0}^n \Gras(k,N)$ we have
$$\P\{X\in K\mbox{ and }V\in K'\}=\P\{X\in K\}\P\{V\in K'\}.$$

Given a vector $x\in\R^m$, by $\|x\|$ we denote the standard Euclidean norm and
by $\|x\|_\infty$ ~--- the $\ell_\infty^m$-norm of $x$.
By $S^{m-1}$ (respectively, $B_2^m$) we denote the Euclidean unit
sphere (respectively, the closed unit ball) in $\R^m$.
Further, for a set $K\subset\R^m$, $\dist(x,K)=\inf \{\|x-y\| : y\in K\}$ denotes  the Euclidean distance
between $x$ and $K$.
We use the same notation for the distance between two subsets of $\R^m$.

We will sometimes use the standard identification of $N\times n$ matrices
and linear operators from $\R^n$ to $\R^N$. In particular, for an
$N\times n$ matrix $D$ by $\|D\|$ we mean the operator norm of $D$
treated as the linear operator $D:\ell_2^n \to \ell_2^N$.
For a set $K\subset \R^n$, $D(K)$ is the image of $K$ in $\R^N$ under the action of $D$.
For an $N\times n$ matrix $D$, $\col_j(D)$ is the $j$-th column of $D$ and 
$\spn D$ is the linear span of columns of $D$ in $\R^N$.
The $N\times n$ matrix of ones is denoted by $\mofones$.
For a linear subspace $E\subset\R^n$, $E^\perp$
is the orthogonal complement of $E$ in $\R^n$ and $\Proj_E:\R^n\to\R^n$ is the orgothogonal projection onto $E$.
In the special case when $E$ is the linear span of a subset $\{e_j\}_{j\in J}$
($J\subset\{1,2,\dots,n\}$) of the standard unit basis in $\R^n$,
we will often write $x\chi_J$ in place of $\Proj_E(x)$.

In the paper, we define many universal constants and functions that are frequently referred to later in text.
For convenience, we add to the name of every such constant or function a subscript indicating
the statement where is was defined. For example,
$C_{\ref{net sparsification lemma}}$ is the universal constant from Lemma~\ref{net sparsification lemma}, etc.

Let  $K$ be a subset of $\R^n$ and let $\varepsilon>0$.
A subset $\Net\subset K$ is called an {\it $\varepsilon$-net} for $K$
if for any $y\in K$ there is $y'\in\Net$ with $\|y-y'\|\le\varepsilon$.
We will use a well-known fact that any subset $K\subset B_2^n$ admits an $\varepsilon$-net $\Net$ for $K$
with cardinality $|\Net|\le (3/\varepsilon)^n$.

Given an $\varepsilon$-net $\Net$ for $S^{n-1}$, the matrix $A+B$ from
Theorem~\ref{ssv theor nonsym} 
trivially satisfies $s_n(A+B)\ge
\min\limits_{y'\in\Net}\|Ay'+By'\|-\varepsilon\|A+B\|$. 
This standard $\varepsilon$-net argument is not applicable in our setting as $A+B$
may have a very large norm with a large probability.
A modification of the method in such a way that $\|A+B\|$ does not participate in the estimate for $s_n(A+B)$
is an important element of our proof. In this section we provide a ``non-probabilistic'' form of the argument.
Given a non-random $N\times n$ matrix $D$, we shall represent it
as a sum of two matrices $\regm$ and $\irregm$;
then we are able to estimate $s_n(D)$ from below in terms
of the norm $\|\regm\|$ of the ``regular part'' of
the matrix $D$ and distances between
certain vectors and subspaces in $\R^N$ (determined by matrices $\regm$ and $\irregm$).
We start with a simpler version of the argument:

\begin{lemma}\label{spl proc}
Let $N,n\in\N$, $h,\varepsilon>0$ and let $\regm,\irregm,D$ be $N\times n$ (non-random) matrices with $D=\regm+\irregm$.
Further, let $\Net$ be an $\varepsilon$-net on $S^{n-1}$ such that for any $y'\in\Net$ we have
$$
\dist\bigl(\regm y',\spn \irregm\bigr)\ge h.
$$
Then
$$s_n(D)\ge\inf\limits_{y\in S^{n-1}}\dist\bigl(\regm y,\spn \irregm\bigr)\ge h-\varepsilon\|\regm\|.$$
\end{lemma}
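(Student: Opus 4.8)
The plan is to establish the two inequalities separately by elementary, purely deterministic arguments; no probability enters, since the statement is non-random.

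First I would prove the left-hand inequality $s_n(D)\ge\inf\limits_{y\in S^{n-1}}\dist(\regm y,\spn \irregm)$. Fix any $y\in S^{n-1}$. Since $D=\regm+\irregm$, we have $Dy=\regm y+\irregm y$, and the vector $\irregm y$ lies in $\spn \irregm$ by definition of the column span. Hence
\[
\|Dy\|=\bigl\|\regm y-(-\irregm y)\bigr\|\ge\dist\bigl(\regm y,\spn \irregm\bigr),
\]
because $-\irregm y$ is an admissible competitor in the infimum defining the distance. Taking the infimum over $y\in S^{n-1}$ gives the claim. This is precisely the step where the norm of $D$, or of $\irregm$, disappears from the estimate: the ``bad'' component $\irregm y$ is absorbed into the subspace $\spn \irregm$ rather than being bounded in norm.

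Next I would prove the right-hand inequality $\inf\limits_{y\in S^{n-1}}\dist(\regm y,\spn \irregm)\ge h-\varepsilon\|\regm\|$ using the $\varepsilon$-net. The key elementary fact is that for any fixed set $K\subset\R^N$ the function $x\mapsto\dist(x,K)$ is $1$-Lipschitz, i.e.\ $|\dist(a,K)-\dist(b,K)|\le\|a-b\|$. Given $y\in S^{n-1}$, choose $y'\in\Net$ with $\|y-y'\|\le\varepsilon$; applying the Lipschitz property with $K=\spn \irregm$, $a=\regm y$, $b=\regm y'$, and then the hypothesis $\dist(\regm y',\spn \irregm)\ge h$, I obtain
\[
\dist\bigl(\regm y,\spn \irregm\bigr)\ge\dist\bigl(\regm y',\spn \irregm\bigr)-\|\regm y-\regm y'\|\ge h-\|\regm\|\,\|y-y'\|\ge h-\varepsilon\|\regm\|.
\]
Taking the infimum over $y\in S^{n-1}$ completes the proof.

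I do not expect a genuine obstacle here; the only points needing a little care are the observation that $\irregm y\in\spn \irregm$ (so that the triangle inequality is applied with the correct competitor) and the $1$-Lipschitz property of the distance-to-a-set function, both of which are standard. The substance of the lemma is conceptual rather than technical: it isolates the mechanism that lets one lower-bound $s_n(D)$ without ever invoking $\|D\|$ or $\|\irregm\|$, which is exactly what the later probabilistic argument will exploit.
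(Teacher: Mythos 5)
Your proof is correct and uses essentially the same argument as the paper: bound $\|Dy\|$ below by $\dist(\regm y,\spn\irregm)$ using $\irregm y\in\spn\irregm$, then pass to the net via the $1$-Lipschitz property of the distance-to-a-set function. You merely spell out the two inequalities separately, whereas the paper chains them in a single display.
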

\begin{proof}
Choose any $y\in S^{n-1}$ and $y'\in\Net$ such that $\|y-y'\|\le\varepsilon$. Then
$$
\|Dy\|=\bigl\|\regm y+\irregm y\bigr\|
\ge \dist\bigl(\regm y, \spn \irregm\bigr)
\ge \dist\bigl(\regm y', \spn \irregm\bigr)-\varepsilon\|\regm\|
\ge h-\varepsilon\|\regm\|.
$$
By taking the infimum over all $y\in S^{n-1}$, we obtain the result.
\end{proof}

Note that Lemma~\ref{spl proc} cannot be used to handle matrices with the aspect ratio less than $2$.
Indeed, the lower estimate $s_n(D)\ge\inf\limits_{y\in S^{n-1}}\dist\bigl(\regm y,\spn \irregm\bigr)$
is non-trivial only if $\spn \regm\cap\spn \irregm=0$, which is not true when $N<2n$ and both $\regm$ and $\irregm$ have full rank.
The following strengthening of Lemma~\ref{spl proc} resolves the problem:
\begin{prop}\label{gen spl proc}
Let $N,n\in\N$, $S\subset S^{n-1}$ and let $\regm,\irregm,D$ be $N\times n$ (non-random) matrices with $D=\regm+\irregm$.
Further, suppose that the numbers $h,\varepsilon>0$, a subset $\Net\subset\R^n$ and a collection of linear subspaces
$\{E_{y'}\subset\R^n:\,y'\in\Net\}$ satisfy the following conditions:
$1)$ $y'\in E_{y'}$ for all $y'\in\Net$; $2)$ for any $y'\in\Net$ we have
\begin{equation}\label{gen spl cond 1}
\dist\bigl(\regm y',D(E_{y'}^\perp)+\irregm(E_{y'})\bigr)\ge h;
\end{equation}
and $3)$ for any $y\in S$ there is $y'\in\Net$ such that
$$\|\Proj_{E_{y'}}(y)-y'\|\le\varepsilon.$$
Then
$$\inf\limits_{y\in S}\|Dy\|\ge h-\varepsilon\|\regm\|.$$
\end{prop}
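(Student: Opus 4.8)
The plan is to imitate the proof of Lemma~\ref{spl proc}, the only new ingredient being that each $y\in S$ is approximated not on the whole sphere but \emph{after projecting onto the associated subspace} $E_{y'}$, with the orthogonal complement $E_{y'}^\perp$ absorbed into the ``free'' subspace appearing in \eqref{gen spl cond 1}. Concretely, I would fix an arbitrary $y\in S$, invoke condition $3)$ to choose $y'\in\Net$ with $\|\Proj_{E_{y'}}(y)-y'\|\le\varepsilon$, abbreviate $E:=E_{y'}$, and split $y=\Proj_E(y)+\Proj_{E^\perp}(y)$.

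Next I would apply $D$ to this decomposition and regroup. Writing $\regm\Proj_E(y)=\regm y'+\regm\bigl(\Proj_E(y)-y'\bigr)$ and using $D=\regm+\irregm$ on the $E$-component, one obtains
\[
Dy=\regm y'+\regm\bigl(\Proj_E(y)-y'\bigr)+w,\qquad w:=\irregm\Proj_E(y)+D\Proj_{E^\perp}(y).
\]
Here $\irregm\Proj_E(y)\in\irregm(E)$ because $\Proj_E(y)\in E$, and $D\Proj_{E^\perp}(y)\in D(E^\perp)$ because $\Proj_{E^\perp}(y)\in E^\perp$, so $w$ lies in the linear subspace $D(E^\perp)+\irregm(E)$ from \eqref{gen spl cond 1}; since this subspace also contains $-w$, we get $\|\regm y'+w\|\ge\dist\bigl(\regm y',\,D(E^\perp)+\irregm(E)\bigr)\ge h$. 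The triangle inequality then yields
\[
\|Dy\|\ge\|\regm y'+w\|-\bigl\|\regm\bigl(\Proj_E(y)-y'\bigr)\bigr\|\ge h-\|\regm\|\,\|\Proj_E(y)-y'\|\ge h-\varepsilon\|\regm\|,
\]
and taking the infimum over $y\in S$ finishes the argument.

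I do not expect any serious obstacle: everything reduces to correct bookkeeping, namely arranging $Dy$ so that its only summand lying outside the ``exceptional'' subspace $D(E_{y'}^\perp)+\irregm(E_{y'})$ is $\regm y'$, together with an error $\regm(\Proj_E(y)-y')$ whose norm is controlled by $\|\regm\|$ alone — which is precisely why $\|D\|$, possibly enormous, never enters the estimate. The one thing to check carefully is that each of the three pieces of $Dy-\regm y'$ lands in the intended subspace (the $\Proj_{E^\perp}$-part is pushed through the full matrix $D$, the $\Proj_E$-part only through $\irregm$), and condition $1)$ is what keeps $y'$ inside the subspace $E_{y'}$ along which the approximation in $3)$ is performed, making this grouping the natural one.
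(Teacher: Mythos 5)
Your proof is correct and follows essentially the same route as the paper's: both hinge on the decomposition $Dy=\regm\Proj_{E_{y'}}(y)+\bigl(\irregm\Proj_{E_{y'}}(y)+D\Proj_{E_{y'}^\perp}(y)\bigr)$ and on observing that the second group lies in $D(E_{y'}^\perp)+\irregm(E_{y'})$; the only cosmetic difference is that you peel off the error term $\regm(\Proj_{E_{y'}}(y)-y')$ first and then bound by a distance, whereas the paper bounds by $\dist\bigl(\regm\Proj_{E_{y'}}(y),\cdot\bigr)$ directly and then uses $1$-Lipschitzness of the distance to a subspace to pass to $\regm y'$.
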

\begin{proof}
Take any $y\in S$ and let $y'\in\Net$ be such that $\|\Proj_{E_{y'}}(y)-y'\|\le\varepsilon$. Then
\begin{align*}
\|Dy\|&=\bigl\|\regm(\Proj_{E_{y'}}(y))+\bigl(D(\Proj_{E_{y'}^\perp}(y))+\irregm(\Proj_{E_{y'}}(y))\bigr)\bigr\|\\
&\ge \dist\bigl(\regm(\Proj_{E_{y'}}(y)), D(E_{y'}^\perp)+\irregm(E_{y'})\bigr)\\
&\ge \dist\bigl(\regm y', D(E_{y'}^\perp)+\irregm(E_{y'})\bigr)-\varepsilon\|\regm\|\\
&\ge h-\varepsilon\|\regm\|.
\end{align*}
Taking the infimum over $S$, we get the result.
\end{proof}

To apply Proposition~\ref{gen spl proc} we need an estimate for the distance between
a random vector in $\R^N$ with independent coordinates and a fixed linear subspace.
For any random vector $X$ in $\R^N$ define the concentration function of $X$ by
$$\concf(X,h)=\sup\limits_{\lambda\in\R^N}\P\bigl\{\|X-\lambda\|\le h\bigr\},\;\;h\ge 0.$$
Note that for $N=1$ the above definition is consistent with that given in the introduction.
The following result is proved by M.~Rudelson and R.~Vershynin in \cite{RV_SMALLBALL}:
\begin{theor}[\cite{RV_SMALLBALL}]\label{RV conc lemma}
Let $X=(X_1,X_2,\dots,X_m)$ be a random vector in $\R^m$ with independent coordinates such that
$$\concf(X_i,h)\le \eta,\;\;i=1,2,\dots,m$$
for some $h>0,\eta\in(0,1)$. Then for any $d\in\{1,2,\dots,m\}$ and any $d$-dimensional non-random subspace $E\subset\R^m$
$$\concf(\Proj_E X,h\sqrt{d})\le (C_{\ref{RV conc lemma}}\eta)^d,$$
where $C_{\ref{RV conc lemma}}>0$ is a (sufficiently large) universal constant.
\end{theor}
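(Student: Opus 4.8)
The plan is to prove the estimate by a Fourier-analytic (Esseen-type) argument. First I would normalize to the case $h=1$ by replacing $X$ with $X/h$, and I would dispose of the range $\eta\ge\eta_0$, for a suitable universal constant $\eta_0>0$, since there $(C_{\ref{RV conc lemma}}\eta)^d\ge 1$ and there is nothing to prove. Then I would fix an isometric embedding $Q\colon\R^d\to\R^m$ with image $E$, so that $Q^\top Q$ is the identity of $\R^d$ and $QQ^\top=\Proj_E$, and I would write $r_j\in\R^d$ for the $j$-th row of $Q$. Since the coordinates of $\Proj_EX$ in the basis provided by $Q$ are the entries of $Q^\top X$, one has $\concf(\Proj_EX,t)=\concf(Q^\top X,t)$ for all $t$, and the characteristic function factorizes,
\[
\Exp\,e^{i\langle\theta,Q^\top X\rangle}=\prod_{j=1}^m\varphi_j(\langle r_j,\theta\rangle),
\]
for $\theta\in\R^d$, where $\varphi_j$ denotes the characteristic function of $X_j$. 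The only structural facts about $E$ that I would use are the consequences $\|r_j\|\le 1$ (for all $j$) and $\sum_j\|r_j\|^2=d$ of the frame identity $\sum_j r_jr_j^\top=I_d$.

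Two inputs enter. The first is a smoothing (Esseen-type) inequality in $\R^d$: convolving the law of $Q^\top X$ with a probability density concentrated at a scale $\tau$ and comparing the supremum of the resulting density with the small-ball probability yields, after the change of variables $\theta\mapsto g/\tau$ with $g$ a standard Gaussian vector in $\R^d$, a bound of the form
\[
\concf(\Proj_EX,\sqrt d)\ \le\ (C/\tau)^d\;\Exp_{g}\prod_{j=1}^m\bigl|\varphi_j(\langle r_j,g\rangle/\tau)\bigr|.
\]
The second is a pair of single-variable estimates obtained by symmetrization: with $\widetilde X_j=X_j-X_j'$ the symmetrized variable one has $|\varphi_j(s)|^2=\Exp\cos(s\widetilde X_j)$ and $\P\{|\widetilde X_j|\le 1\}\le\concf(X_j,1)\le\eta$, which gives both the pointwise bound $|\varphi_j(s)|\le 1-c\min(1,s^2)$ and, for a centered Gaussian argument of variance $\sigma^2$, $\Exp|\varphi_j(s)|^2=\Exp e^{-\sigma^2\widetilde X_j^2/2}\le\eta+e^{-\sigma^2/2}\le 2\eta$ provided $\sigma^2\ge 2\log(1/\eta)$. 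One cannot hope to improve the pointwise bound: lattice-type $X_j$ produce ``revivals'' at which $|\varphi_j|$ returns close to $1$ far from the origin, so $|\varphi_j|$ is small only after averaging, against a measure of width at least of order $\sqrt{\log(1/\eta)}$.

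The heart of the proof, and the step I expect to be the main obstacle, is to combine these two inputs: to estimate $\Exp_{g}\prod_j|\varphi_j(\langle r_j,g\rangle/\tau)|$ and to choose $\tau$ so that the gain overcomes the prefactor $(C/\tau)^d$. The arguments $\langle r_j,g\rangle$ are jointly Gaussian with $\sum_j\langle r_j,g\rangle^2=\|g\|^2\approx d$; since there are $m>d$ of them, one can neither restrict the product to a $d$-element subframe — that would cost an unaffordable factor $(m/d)^{d/2}$ — nor apply H\"older with equal exponents, which destroys the $\eta$-dependence altogether. Instead one has to use the identity $\sum_j r_jr_j^\top=I_d$ directly: it forces the ``energy'' $\|g\|^2$ to be shared among the $m$ coordinates so that, for the value of $\tau$ dictated by matching the radius $\sqrt d$ with the effective scale $\sqrt{\log(1/\eta)}$ of the single-variable estimate, a suitable fraction of the factors $|\varphi_j(\langle r_j,g\rangle/\tau)|$ is simultaneously of size $\lesssim\sqrt\eta$ with overwhelming probability; the exceptional event, on which too many of the arguments fall near $0$ or near a revival, must be controlled through the equidistribution of $g$ relative to the frame together with the pointwise bound $|\varphi_j(s)|\le 1-c\min(1,s^2)$. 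Getting this balance right, so that one pays only $(C/\tau)^d$ and not $(m/d)^{d/2}$ or a constant-to-the-power-$d$ independent of $\eta$, is the delicate point, and it is here that the aspect-ratio hypothesis $m>d$ is used.

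Once $\Exp_{g}\prod_j|\varphi_j(\langle r_j,g\rangle/\tau)|$ has been bounded by $(c\eta\tau)^d$, substituting back into the smoothing inequality gives $\concf(\Proj_EX,\sqrt d)\le(C\eta)^d$, and undoing the normalization $h=1$ yields $\concf(\Proj_EX,h\sqrt d)\le(C\eta)^d$. The remaining ingredients — the reduction to $h=1$, the Esseen inequality itself, and the symmetrization bounds — are standard.
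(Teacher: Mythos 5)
The paper does not prove this theorem itself; it is imported as a black box from Rudelson--Vershynin \cite{RV_SMALLBALL} (``The following result is proved by M.~Rudelson and R.~Vershynin in \cite{RV_SMALLBALL}''), so there is no in-paper argument to compare against.

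Judged on its own merits, your Fourier-analytic scaffolding --- normalizing to $h=1$, passing to the frame $\{r_j\}$ via $Q^\top X$, an Esseen-type smoothing inequality with a Gaussian test function, and symmetrization to get both $|\varphi_j(s)|\le 1-c\min(1,s^2)$ and the averaged bound against a Gaussian argument --- is the correct framework, and it is the one Rudelson and Vershynin actually use. The problem is that you explicitly stop at the step that carries essentially all of the content: the bound
$$\Exp_g\prod_{j=1}^m\bigl|\varphi_j(\langle r_j,g\rangle/\tau)\bigr|\le(c\eta\tau)^d$$
for the right choice of $\tau$ is stated as the target and acknowledged as ``the main obstacle,'' but no mechanism is offered to obtain it. You correctly diagnose why the obvious moves fail (a $d$-element subframe loses $(m/d)^{d/2}$, equal-exponent H\"older loses the $\eta$-dependence, and lattice revivals prevent a pointwise bound), yet ``a suitable fraction of the factors is simultaneously of size $\lesssim\sqrt\eta$ with overwhelming probability; the exceptional event \dots must be controlled'' is a description of what a proof would accomplish, not an argument. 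Everything genuinely nontrivial in this theorem --- using the Parseval identity $\sum_j r_jr_j^\top=I_d$ to force the mass of $g$ to be spread over the $m$ arguments, neutralizing the revivals of $|\varphi_j|$ at the scale $\tau\sim\sqrt{\log(1/\eta)}$, and extracting a factor $\eta^d$ rather than a universal $c^d$ --- lives precisely in the step you leave open, and the surrounding ingredients (Esseen, symmetrization, the $\eta\ge\eta_0$ reduction) are routine. As written, this is a correct high-level plan with a hole at its center, not a proof, so I cannot credit it as establishing the theorem.
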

This theorem gives a nontrivial estimate for concentration only for $\eta$ sufficiently close to zero.
Below, we provide an elementary estension of this result
covering the case of ``more concentrated'' coordinates. First, let us recall a theorem of B.~Rogozin:
\begin{theor}[{\cite{Rogozin}}]\label{rogozin lemma}
Let $k\in\N$, $\xi_1,\xi_2,\dots,\xi_k$ be independent random variables and let $h_1$, $h_2,\dots$, $h_k>0$ be some
real numbers. Then for any $h\ge\max\limits_{j=1,2,\dots,k}h_j$,
$$\concf\Bigl(\sum\limits_{j=1}^k\xi_j,h\Bigr)\le C_{\ref{rogozin lemma}}h\Bigl(\sum\limits_{j=1}^k\bigl(1-\concf(\xi_j,h_j)\bigr)h_j^2\Bigr)^{-1/2},$$
where $C_{\ref{rogozin lemma}}>0$ is a universal constant.
\end{theor}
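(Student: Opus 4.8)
The plan is to derive the inequality by the Fourier-analytic method of Esseen. Write $f_j(t)=\Exp e^{it\xi_j}$ for the characteristic function of $\xi_j$, so that by independence the characteristic function of $S:=\xi_1+\dots+\xi_k$ is $\prod_{j=1}^k f_j(t)$. The first ingredient is the smoothing inequality
$$\concf(\eta,h)\le C\,h\int_{-1/h}^{1/h}\bigl|\Exp e^{it\eta}\bigr|\,dt,$$
valid for every real random variable $\eta$ and every $h>0$ with an absolute constant $C$. I would obtain it by majorising $\mathbf 1_{[-h,h]}$ by a fixed multiple of the Fej\'er-type kernel $w(x)=\bigl(\tfrac{\sin(x/(2h))}{x/(2h)}\bigr)^{2}$, which is nonnegative, bounded below by a positive absolute constant on $[-h,h]$, and has Fourier transform supported in $[-1/h,1/h]$; bounding $\P\{|\eta-\lambda|\le h\}$ by a constant times $\Exp\,w(\eta-\lambda)$ and expanding the latter by Fourier inversion, the translation $\lambda$ contributes only a unimodular factor, which disappears after passing to absolute values. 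Applied to $\eta=S$, this reduces Theorem~\ref{rogozin lemma} to the estimate
$$\int_{-1/h}^{1/h}\prod_{j=1}^k|f_j(t)|\,dt\le C'\,\sigma^{-1},\qquad\sigma^{2}:=\sum_{j=1}^k\bigl(1-\concf(\xi_j,h_j)\bigr)h_j^{2}.$$

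Next I would translate the concentration hypotheses into bounds on $|f_j|$. For an independent copy $\xi_j'$ of $\xi_j$ one has $|f_j(t)|^{2}=\Exp\cos\bigl(t(\xi_j-\xi_j')\bigr)$, hence $1-|f_j(t)|^{2}=\Exp\bigl[1-\cos\bigl(t(\xi_j-\xi_j')\bigr)\bigr]\in[0,1]$, while splitting around a median of $\xi_j$ yields the symmetrization bound
$$\P\bigl\{|\xi_j-\xi_j'|\ge h_j\bigr\}\ge\tfrac12\bigl(1-\concf(\xi_j,h_j)\bigr),$$
so the symmetrized variable retains, up to a constant, all the dispersion we need. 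Since $x\le\exp\bigl(-\tfrac12(1-x^{2})\bigr)$ on $[0,1]$, we have $\prod_j|f_j(t)|\le\exp\bigl(-\tfrac12\sum_j(1-|f_j(t)|^{2})\bigr)$; so, morally, once one knows that $\sum_j\bigl(1-|f_j(t)|^{2}\bigr)\gtrsim t^{2}\sigma^{2}$ on $|t|\le1/h$ --- this is where the hypothesis $h\ge\max_j h_j$ enters, since it makes $t^{2}h_j^{2}\le1$ over the whole interval of integration --- integrating the resulting Gaussian over $\R$ gives the desired $C'\sigma^{-1}$. One may assume $\sigma$ is larger than a suitable fixed multiple of $h$, for otherwise the right-hand side of the asserted inequality exceeds $1\ge\concf$ and there is nothing to prove.

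The delicate point, which I expect to be the main obstacle, is precisely this lower bound on $\sum_j\bigl(1-|f_j(t)|^{2}\bigr)$: the naive \emph{pointwise} claim ``$1-|f_j(t)|^{2}\ge c\,t^{2}h_j^{2}\bigl(1-\concf(\xi_j,h_j)\bigr)$ for $|t|\le1/h$'' is \emph{false}. Indeed, if $\xi_j$ is supported on two points at distance $2\pi/t_0$ with $t_0\in(0,1/h)$, then $1-|f_j(t_0)|^{2}=0$, whereas $1-\concf(\xi_j,h_j)=\tfrac12$ --- the mass of $\xi_j-\xi_j'$ having concentrated on the lattice $\tfrac{2\pi}{t}\Z$. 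To circumvent this one exploits the self-improving inequality
$$1-|f_j(mt)|^{2}\le m^{2}\bigl(1-|f_j(t)|^{2}\bigr),\qquad m\in\N,$$
which follows from $|\sin(m\alpha)|\le m|\sin\alpha|$ and expresses that $t\mapsto\bigl(1-|f_j(t)|^{2}\bigr)/t^{2}$ does not grow under integer dilation; in tandem with the oscillation-robust \emph{integrated} bound
$$\int_0^{1/h}\bigl(1-|f_j(t)|^{2}\bigr)\,dt\ge c\,h^{-3}h_j^{2}\bigl(1-\concf(\xi_j,h_j)\bigr)$$
--- which comes from the identity $\int_0^{1/h}\Exp\bigl[1-\cos(t\zeta)\bigr]\,dt=\Exp\bigl[\tfrac1h-\tfrac{\sin(\zeta/h)}{\zeta}\bigr]$ for $\zeta=\xi_j-\xi_j'$, the elementary estimate $\tfrac1h-\tfrac{\sin(\zeta/h)}{\zeta}\gtrsim\min\bigl(\zeta^{2}h^{-3},h^{-1}\bigr)$, and the symmetrization bound above --- this allows one to control the Fourier integral $\int_{-1/h}^{1/h}\prod_j|f_j(t)|\,dt$ while never invoking a pointwise estimate that the oscillation of $1-\cos$ would invalidate. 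Collecting the three steps produces the asserted inequality with a universal constant $C_{\ref{rogozin lemma}}$.
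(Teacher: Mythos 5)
The paper does not prove this statement at all; it is imported as a black box from Rogozin's 1961 paper (the Kolmogorov--Rogozin inequality), so there is no internal argument to compare against. That said, your sketch can still be assessed on its own.

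Your three ingredients are each correct: the Esseen smoothing bound $\concf(\eta,h)\le Ch\int_{|t|\le 1/h}|\Exp e^{it\eta}|\,dt$; the symmetrization bound $\P\{|\xi_j-\xi_j'|\ge h_j\}\ge\tfrac12(1-\concf(\xi_j,h_j))$ together with $\prod_j|f_j|\le\exp(-\tfrac12\sum_j(1-|f_j|^2))$; the self-improving inequality $1-|f_j(mt)|^2\le m^2(1-|f_j(t)|^2)$ (from $|\sin m\alpha|\le m|\sin\alpha|$); and the integrated bound $\int_0^{1/h}(1-|f_j|^2)\,dt\ge c\,h^{-3}h_j^2(1-\concf(\xi_j,h_j))$, whose verification via $\tfrac1h-\tfrac{\sin(\zeta/h)}{\zeta}\gtrsim\min(\zeta^2 h^{-3},h^{-1})$ and the assumption $h\ge h_j$ is sound. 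You also correctly diagnose why a pointwise lower bound on $1-|f_j|^2$ must fail for lattice variables.

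The gap is the final sentence. Writing $T=1/h$, $g(t)=\sum_j(1-|f_j(t)|^2)$ and $V=\int_0^T g$, what you have after the three steps is: $V\gtrsim\sigma^2 T^3$, $g(mt)\le m^2 g(t)$, $g\ge 0$, and you must conclude $\int_0^T e^{-g(t)/2}\,dt\le C T^{3/2}/\sqrt{V}$. This implication is \emph{not} an immediate consequence of the listed properties, and it is precisely the technical heart of the Esseen/Kesten Fourier proof of the Kolmogorov--Rogozin inequality; it is where all the work happens. The self-improving inequality only propagates smallness of $g$ forward along integer multiples of a single point, which is a measure-zero set and by itself does not control $|\{t\in[0,T]:g(t)<\varepsilon\}|$, nor does the integrated bound, which controls only the average of $g$ and not its level sets. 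A crude Chebyshev/level-set argument using only $V$ and $\|g\|_\infty\le k$ gives $\int_0^T e^{-g/2}\le T - c V/k + \dots$, which is nowhere near the required $T^{3/2}/\sqrt V$. Closing the gap requires a genuinely additional argument (in Esseen's treatment one exploits more of the structure of $1-|f_j|^2$ than the two facts you isolate; in other classical treatments the step is avoided entirely by a combinatorial rather than Fourier-analytic route). As it stands, ``collecting the three steps produces the asserted inequality'' asserts the conclusion rather than deriving it, and the reader cannot reconstruct a complete proof from what is written.
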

Now, an easy application of Theorems~\ref{RV conc lemma} and~\ref{rogozin lemma} gives
\begin{cor}\label{RV extension}
Let $X=(X_1,X_2,\dots,X_m)$ be a random vector with independent coordinates such that
$$\concf(X_i,h)\le 1-\tau,\;\;i=1,2,\dots,m$$
for some $h>0,\tau\in(0,1)$. Then for any $d\in\{1,2,\dots,m\}$, $\ell\in\N$ and any $d$-dimensional non-random subspace $E\subset\R^m$
the concentration function of $\Proj_E X$ satisfies
$$\concf(\Proj_E X,h\sqrt{d}/\ell)\le \bigl(C_{\ref{RV conc lemma}}C_{\ref{rogozin lemma}}/\sqrt{\ell \tau}\bigr)^{d/\ell}.$$
\end{cor}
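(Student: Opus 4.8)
The plan is to group the $m$ coordinates of $X$ into disjoint blocks of size $\ell$ and to replace $X$ by the shorter vector of block sums, which by the Kolmogorov--Rogozin inequality (Theorem~\ref{rogozin lemma}) has much better concentration of its coordinates than $X$ itself, and then to feed this vector into Theorem~\ref{RV conc lemma}. Before doing so I would discard the trivial range of parameters: if $\sqrt{\ell\tau}\le C_{\ref{RV conc lemma}}C_{\ref{rogozin lemma}}$ the right-hand side of the asserted bound is at least $1$ and there is nothing to prove, so I may assume $\ell\tau>(C_{\ref{RV conc lemma}}C_{\ref{rogozin lemma}})^2$; in particular $\ell$ is large and $\eta:=C_{\ref{rogozin lemma}}/\sqrt{\ell\tau}$ satisfies $C_{\ref{RV conc lemma}}\eta<1$. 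By a routine reduction (passing to a subspace of $E$, deleting a few coordinates of $X$, and using monotonicity of $\concf$ in the radius) I would also arrange that $\ell$ divides both $m$ and $d$; I will not dwell on this bookkeeping.

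Fix a partition $[m]=B_1\sqcup\dots\sqcup B_k$ into blocks of size $\ell$, where $k=m/\ell$, and let $T\colon\R^m\to\R^k$ be the block-sum operator $(Tx)_j=\sum_{i\in B_j}x_i$. Its rows have pairwise disjoint supports, so $\|T\|=\sqrt\ell$ and the coordinates $Y_j:=(TX)_j$ of $Y:=TX$ are independent; applying Theorem~\ref{rogozin lemma} to each $Y_j$ with all intermediate radii equal to $h$ and output radius $h$, and using $1-\concf(X_i,h)\ge\tau$, gives $\concf(Y_j,h)\le C_{\ref{rogozin lemma}}h(\ell\tau h^2)^{-1/2}=\eta$. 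Now one transports the problem to $\R^k$: choose a subspace $E'\subset\R^k$ with $\dim E'\ge d/\ell$ and $T(E^\perp)\perp E'$. After reducing to $\lambda\in E$ this gives $\Proj_{E'}(TX)-\Proj_{E'}(T\lambda)=\Proj_{E'}T(\Proj_EX-\lambda)$, hence $\|\Proj_{E'}(TX)-\Proj_{E'}(T\lambda)\|\le\sqrt\ell\,\|\Proj_EX-\lambda\|$, so $\concf(\Proj_EX,r)\le\concf(\Proj_{E'}Y,\sqrt\ell\,r)$ for every $r>0$. Theorem~\ref{RV conc lemma}, applied to $Y$ with the subspace $E'$ of dimension $d':=\dim E'$ and coordinate bound $\eta$, gives $\concf(\Proj_{E'}Y,h\sqrt{d'})\le(C_{\ref{RV conc lemma}}\eta)^{d'}$. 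Since $d'\ge d/\ell$ one has $\sqrt\ell\, h\sqrt d/\ell=h\sqrt{d/\ell}\le h\sqrt{d'}$, and combining these estimates yields $\concf(\Proj_EX,h\sqrt d/\ell)\le(C_{\ref{RV conc lemma}}\eta)^{d'}\le(C_{\ref{RV conc lemma}}\eta)^{d/\ell}$, where the last inequality uses $C_{\ref{RV conc lemma}}\eta<1$; this is exactly the claim.

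The step I expect to be the real obstacle is the construction of $E'$: one needs $\dim\bigl(T(E^\perp)\bigr)^\perp\ge d/\ell$, equivalently that the fixed subspace $E^\perp$ meets the block-diagonal subspace $\ker T$ in dimension at least $(1-1/\ell)\dim E^\perp$. Equal-weight block sums need not achieve this for a general $E$, so I would exploit the freedom in the choice of partition and, if necessary, replace $(Tx)_j=\sum_{i\in B_j}x_i$ by adapted weighted sums $(Tx)_j=\sum_{i\in B_j}w_j(i)x_i$ — these are still controlled by Theorem~\ref{rogozin lemma} as long as the weight vectors $w_j$ are kept nearly flat, so that the coordinate bound remains of order $1/\sqrt{\ell\tau}$. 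Arranging the partition and the weights so that this dimension estimate holds for every $E$, together with the divisibility bookkeeping, is the only genuinely technical point; everything else is a direct concatenation of Theorems~\ref{rogozin lemma} and~\ref{RV conc lemma}.
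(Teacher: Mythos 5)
Your block-summing strategy has a genuine gap, and it is precisely the one you flag yourself: the existence of a suitable $E'$. Write $k=m/\ell$, so $\ker T$ has dimension $m-k$. You need $\dim\bigl(T(E^\perp)\bigr)^\perp\ge d/\ell$, equivalently $\dim\bigl(E^\perp\cap\ker T\bigr)\ge(m-d)(1-1/\ell)$. For a generic $(m-d)$-dimensional subspace $E^\perp$ the intersection with a fixed $(m-k)$-dimensional subspace has dimension $\max(0,\,m-d-k)$, which is strictly smaller than $(m-d)(1-1/\ell)$ whenever $d>0$; in particular if $m-d\ge k$ one gets $T(E^\perp)=\R^k$ and $E'=\{0\}$, so the construction collapses for every $d\le m(1-1/\ell)$, i.e.\ for essentially the whole range in which the corollary is used. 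Your proposed repair via weighted block sums cannot rescue this: you need the rows of $T$ to have pairwise disjoint supports so that the coordinates of $TX$ remain independent (Theorem~\ref{rogozin lemma} requires this), and you need the weight vectors to be nearly flat so that the resulting coordinate concentration is $O(1/\sqrt{\ell\tau})$. Those two constraints together pin $\ker T$ down to a product of codimension-one subspaces of the blocks and leave nowhere near enough freedom to force $\ker T$ to swallow most of an arbitrary $E^\perp$ --- already for $d=1$, $m=2\ell$, $E=\mathrm{span}(e_1)$ one would need a row of $T$ proportional to $e_1$, which is as far from flat as possible.

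The paper avoids all of this by never leaving $\R^m$ and never touching $E$. It takes $\ell$ independent copies $X^1,\dots,X^\ell$ of $X$, sets $S=\sum_j X^j$, applies Theorem~\ref{rogozin lemma} coordinatewise (the sum over copies replacing your sum over a block) to get $\concf(S_i,h)\le C_{\ref{rogozin lemma}}/\sqrt{\ell\tau}$, and then applies Theorem~\ref{RV conc lemma} directly to $S$ and the \emph{same} subspace $E$ to get $\concf(\Proj_E S,h\sqrt d)\le(C_{\ref{RV conc lemma}}C_{\ref{rogozin lemma}}/\sqrt{\ell\tau})^d$. The final step is the elementary tensorization $\concf(\Proj_E X,r)^\ell\le\concf(\Proj_E S,\ell r)$, which follows from independence of the copies and the triangle inequality, applied with $r=h\sqrt d/\ell$. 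You should adopt this device: the same Rogozin-plus-Rudelson--Vershynin concatenation you had in mind, but with ``$\ell$ copies of the whole vector'' replacing ``blocks of $\ell$ coordinates'', which removes the subspace-matching problem entirely.
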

\begin{proof}
Let $X^1,X^2,\dots,X^\ell$ be independent copies of $X$ and $S=(S_1,S_2,\dots,S_m)=\sum\limits_{j=1}^\ell X^j$. Then, in view of the
condition on coordinates of $X$ and Theorem~\ref{rogozin lemma}, we obtain
$$\concf(S_i,h)\le C_{\ref{rogozin lemma}}\Bigl(\ell\bigl(1-\concf(X_i,h)\bigr)\Bigr)^{-1/2}
\le \frac{C_{\ref{rogozin lemma}}}{\sqrt{\ell \tau}},\;\;i=1,2,\dots,m.$$
Then Theorem~\ref{RV conc lemma} gives
$$\concf(\Proj_E S,h\sqrt{d})\le \bigl(C_{\ref{RV conc lemma}}C_{\ref{rogozin lemma}}/\sqrt{\ell \tau}\bigr)^d,$$
and via the definition of $S$ we get the statement.
\end{proof}
\begin{rem}
Note that for any non-zero $\tau$ we can choose $\ell\in\N$ such that
the upper estimate for the concentration function provided by Corollary~\ref{RV extension} is non-trivial
(strictly less than $1$).
In fact, a slightly weaker version of Corollary~\ref{RV extension} still sufficient for our purposes could be proved using
the original result of P.~Levy from \cite{Levy} instead of Theorem~\ref{rogozin lemma}.
\end{rem}

As an immediate application of Corollary~\ref{RV extension}, we prove a statement about {\it peaky} vectors.
We call a vector $y\in S^{n-1}$ {\it $\theta$-peaky} for some $\theta>0$ if $\|y\|_\infty\ge\theta$.
The set of all $\theta$-peaky unit vectors in $\R^n$ shall be denoted by $S^{n-1}_p(\theta)$.

\begin{prop}[Peaky vectors]\label{peaky lemma}
Let $\delta>1$ and let $n,N\in\N$ satisfy $N\ge \delta n$. Further, assume we are given $\theta,\gamma>0$
and let $U=(u_{ij})$ be an $N\times n$ random matrix with independent entries (not necessarily identically distributed),
each entry $u_{ij}$ satisfying
$$\concf(u_{ij},1)\le 1-\gamma.$$
Then
$$\P\Bigl\{\inf\limits_{y\in S_p^{n-1}(\theta)}\|Uy\|\le h_{\ref{peaky lemma}}\theta
\sqrt{N}\Bigr\}\le n\exp(-w_{\ref{peaky lemma}}N),$$
where the $h_{\ref{peaky lemma}},w_{\ref{peaky lemma}}>0$ depend only on $\gamma$ and $\delta$.
\end{prop}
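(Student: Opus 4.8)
The plan is to reduce the statement to a lower bound on the distance between a single random column of $U$ and the span of the remaining columns. For $j\in\{1,\dots,n\}$ write $H_j:=\spn\{\col_i(U):i\ne j\}$, a random subspace of $\R^N$ of dimension at most $n-1$. Given $y\in S^{n-1}_p(\theta)$, choose an index $j=j(y)$ with $|y_j|\ge\theta$; since $Uy=y_j\col_j(U)+\sum_{i\ne j}y_i\col_i(U)$ and the second summand lies in $H_j$, we obtain
$$\|Uy\|\ge\dist\bigl(y_j\col_j(U),H_j\bigr)=|y_j|\,\dist\bigl(\col_j(U),H_j\bigr)\ge\theta\,\dist\bigl(\col_j(U),H_j\bigr),$$
using that $H_j$ is a linear subspace and $y_j\ne0$. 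Taking the infimum over $y$ gives $\inf_{y\in S^{n-1}_p(\theta)}\|Uy\|\ge\theta\min_{1\le j\le n}\dist(\col_j(U),H_j)$, so it suffices to show that $\min_j\dist(\col_j(U),H_j)> h_{\ref{peaky lemma}}\sqrt N$ outside an event of probability at most $n\exp(-w_{\ref{peaky lemma}}N)$.

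Next I would bound each $\dist(\col_j(U),H_j)$ separately and finish with a union bound over $j$. Fix $j$ and set $X:=\col_j(U)$; this is a random vector in $\R^N$ with independent coordinates, each satisfying $\concf(\cdot,1)\le1-\gamma$, and it is independent of $H_j$ (which is a function of the other columns only). Conditioning on the remaining columns turns $H_j$ into a fixed subspace of dimension at most $n-1$; since $N\ge\delta n$, its orthogonal complement has dimension at least $N-n+1\ge N(1-1/\delta)$, hence at least $d:=\lceil N(1-1/\delta)\rceil$, and $1\le d\le N$. Inside $H_j^\perp$ we may therefore (measurably, e.g.\ via Gram--Schmidt applied to a measurable basis) choose a non-random $d$-dimensional subspace $F$, and then $\dist(X,H_j)=\|\Proj_{H_j^\perp}X\|\ge\|\Proj_F X\|$.

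Now I would apply Corollary~\ref{RV extension} to $X$ with $h=1$, $\tau=\gamma$, the subspace $F$ of dimension $d$, and an integer $\ell=\ell(\gamma)$ chosen so that $C_{\ref{RV conc lemma}}C_{\ref{rogozin lemma}}/\sqrt{\ell\gamma}\le1/2$ (for instance $\ell=\lceil 4C_{\ref{RV conc lemma}}^2C_{\ref{rogozin lemma}}^2/\gamma\rceil$, which depends only on $\gamma$). Taking $\lambda=0$ in the definition of the concentration function, this yields
$$\P\bigl\{\|\Proj_F X\|\le\sqrt d/\ell\bigr\}\le\concf\bigl(\Proj_F X,\sqrt d/\ell\bigr)\le 2^{-d/\ell},$$
first conditionally on the other columns, and hence unconditionally. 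Since $d\ge N(1-1/\delta)$ we have $\sqrt d/\ell\ge h_{\ref{peaky lemma}}\sqrt N$ for $h_{\ref{peaky lemma}}:=\sqrt{1-1/\delta}/\ell$, so $\P\{\dist(\col_j(U),H_j)\le h_{\ref{peaky lemma}}\sqrt N\}\le 2^{-d/\ell}\le\exp(-w_{\ref{peaky lemma}}N)$ with $w_{\ref{peaky lemma}}:=(1-1/\delta)(\ln2)/\ell$. A union bound over $j=1,\dots,n$ combined with the reduction of the first paragraph then gives the desired estimate, with $h_{\ref{peaky lemma}},w_{\ref{peaky lemma}}$ depending only on $\gamma$ and $\delta$.

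I expect the only delicate point to be the independence/conditioning bookkeeping: verifying that $\col_j(U)$ is genuinely independent of $H_j$, so that after conditioning on the other columns the subspace $F$ is legitimately non-random and Corollary~\ref{RV extension} applies, and handling the (routine) measurable selection of $F$. One must also make sure the aspect-ratio hypothesis $N\ge\delta n$ is invoked precisely where the dimension of $H_j^\perp$ has to be a fixed fraction of $N$; everything else is bookkeeping of constants.
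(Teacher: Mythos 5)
Your proof is correct and follows essentially the same route as the paper's: reduce $\inf_{y\in S^{n-1}_p(\theta)}\|Uy\|$ to $\theta\min_j\dist(\col_j(U),\spn\{\col_k(U)\}_{k\ne j})$, bound each distance via Corollary~\ref{RV extension} after conditioning on the other columns (the paper works directly with $d=N-n+1$ and an $(n-1)$-dimensional $F$, while you pick a $d$-dimensional subspace of $H_j^\perp$ with $d=\lceil N(1-1/\delta)\rceil$, an immaterial difference), and finish with a union bound over $j$.
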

\begin{proof}
By Corollary~\ref{RV extension}, for $d=N-n+1$, any $\ell\in\N$ and any fixed $(n-1)$-dimensional subspace $F\subset\R^N$
we have
\begin{align*}
\P\bigl\{\dist(\col_j(U),F)\le \sqrt{d}/\ell\bigr\}
&\le\concf\bigl(\Proj_{F^\perp} (\col_j(U)),\sqrt{d}/\ell\bigr)\\
&\le \bigl(C_{\ref{RV conc lemma}}C_{\ref{rogozin lemma}}/\sqrt{\ell \gamma}\bigr)^{d/\ell},\;\;j=1,2,\dots,n.
\end{align*}
Take $\ell:=\lceil 4C_{\ref{RV conc lemma}}^2 C_{\ref{rogozin lemma}}^2/\gamma\rceil $.
Since for each $j=1,2,\dots,n$, $\col_j(U)$ is independent from the span of the other columns of $U$,
from the above estimate we obtain
$$\P\bigl\{\dist\bigl(\col_j(U), \spn \{\col_k(U)\}_{k\neq j}\bigr)\le h\sqrt{d}\bigr\}
\le \exp\bigl(-wd\bigr),\;\;j=1,2,\dots,n$$
for some $h,w>0$ depending only on $\gamma$. Let
$$\mathcal E=\bigl\{\omega\in\Omega:\,
\dist\bigl(\col_j(U(\omega)), \spn \{\col_k(U(\omega))\}_{k\neq j}\bigr)> h\sqrt{d}\mbox{ for all }j=1,2,\dots,n\bigr\}.$$
Then $\P(\mathcal E)\ge 1-n\exp(-wd)$.
Take arbitrary $\omega\in\mathcal E$. For any $y=(y_1,y_2,\dots,y_n)$ in $S_p^{n-1}(\theta)$ there is $j=j(y)$ such that
$|y_j|\ge\theta$, hence
\begin{align*}
\|U(\omega)y\|
&=\|U(\omega)(y_je_j)+U(\omega)(y-y_je_j)\|\\
&\ge\theta\dist\bigl(\col_j(U(\omega)), \spn \{\col_k(U(\omega))\}_{k\neq j}\bigr)\\
&>h\theta \sqrt{d}.
\end{align*}
Thus,
$$\P\Bigl\{\inf\limits_{y\in S_p^{n-1}(\theta)}\|Uy\|\le h\theta \sqrt{d}\Bigr\}\le n\exp(-wd),$$
and the statement follows.
\end{proof}

Next, we introduce two notions important for us that will be used throughout the rest of the text.
For any number $s\in\R$ and any Borel subset $H\subset\R$, define {\it the $H$-part of $s$} as
$$\trunk{s}{H}=\begin{cases}s,&\mbox{if $s\in H$},\\0,&\mbox{otherwise}.\end{cases}$$
The ``complementary'' $\R\backslash H$-part of $s$
will be denoted by $\trunk{s}{\overline H}$. Obviously, $s=\trunk{s}{H}+\trunk{s}{\overline H}$.
The name and the notation resemble the {\it positive} and {\it negative} part of a real number;
in fact $s_+=\trunk{s}{H}$ for $H=[0,\infty)$.
For a real-valued random variable $\xi$ we define the $H$-part of $\xi$ pointwise:
$\trunk{\xi}{H}(\omega)= \trunk{\xi(\omega)}{H}$ for all $\omega\in\Omega$.
When a variable has a subscript, we will use parentheses to separate the subscript from
the $H$-part notation, for example $\trunk{(\xi_1)}{H}$ is the $H$-part of a random variable $\xi_1$.
Given a matrix $A=(a_{ij})$, its $H$-part $\trunk{A}{H}$ is defined 
entry-wise, i.e.\ $(\trunk{A}{H})_{ij}=\trunk{(a_{ij})}{H}$ for all admissible $i,j$.

For any $N\times n$ matrices $M,M'$ (whether random or not),
a Borel set $H\subset\R$ and a linear subspace $E\subset\R^n$ let
$$\thesubspace{M}{M'}{H}{E}:=(M+M')(E^\perp)+(\trunk{M}{\overline H}+M')(E).$$
Note that $\thesubspace{M}{M'}{H}{E}$ is a linear subspace of $\R^N$ of dimension at most $n$. When the
matrices $M$, $M'$ are clear from the context, we shall write $\thesubspaceshort{H}{E}$ in place of
$\thesubspace{M}{M'}{H}{E}$.
When one or both matrices $M,M'$ are random, $\thesubspace{M}{M'}{H}{E}$ is a {\it random subspace in $\R^N$}
of dimension at most $n$,
that can be formally viewed as a function from $\Omega$ to the disjoint union of Grassmannians $\bigsqcup_{k=0}^n\Gras(k,N)$,
$k=0,1,\dots,n$ (see the beginning of this section).

\bigskip

Let us conclude the section by describing the main idea of the proof of Theorem~\ref{ssv theor nonsym}.
Let $S$ be a subset of $S^{n-1}$.
As we already noted before, the main obstacle in using the standard $\varepsilon$-net argument to get a lower estimate
for $\inf\limits_{y\in S}\|Ay+By\|$ is the need to control the norm of the matrix $A+B$ which is not possible unless we impose strong
restrictions on its entries. Proposition~\ref{gen spl proc} provides a workaround:
we represent $A+B$ as a sum of two random matrices, ``regular'' and ``irregular'', satisfying certain conditions, so that
the lower bound for $\inf\limits_{y\in S}\|Ay+By\|$ involves the norm of only the ``regular'' matrix.
The splitting shall be defined with help of the above concept of $H$-part. Namely, for some specially chosen
$\lambda\in\R$ and $H\subset\R$ we define the ``regular'' part as $\trunk{(A-\lambda\mofones)}{H}$
and the ``irregular'' as $A+B-\trunk{(A-\lambda\mofones)}{H}$ (which is identical to
$\trunk{(A-\lambda\mofones)}{\overline H}+B+\lambda\mofones$). The set $H$ shall be bounded which
implies boundedness of the entries of $\trunk{(A-\lambda\mofones)}{H}$.
This, together with the appropriately chosen ``shift'' $\lambda$, allows us to easily control $\|\trunk{(A-\lambda\mofones)}{H}\|$
from above. We will define $H$ as the union of two specially constructed closed intervals on $\R$.
The choice of $H$ depends on the set $S$ and may depend on the characteristics
of the distribution of the entries of $A$ (we leave this problem for the last section).

The crucial property that our set $H$ shall satisfy is: letting $\tilde A=A-\lambda\mofones$ and $\tilde B=B+\lambda\mofones$,
for certain finite subset of vectors $\Net\subset \R^n$
and a collection of linear subspaces $\{E_{y'}\subset\R^n\}_{y'\in\Net}$ (see Proposition~\ref{gen spl proc}) we have
$$\inf\limits_{y'\in\Net}\dist\bigl(\trunk{\tilde A}{H}y',\thesubspace{\tilde A}{\tilde B}{H}{E_{y'}}\bigr)\gtrsim\sqrt{N}$$
with a large probability. This restriction on $H$ naturally corresponds to the condition \eqref{gen spl cond 1}
in Proposition~\ref{gen spl proc}.
In practice we shall verify this property of $H$ by proving that for every vector $y\in B_2^n$ satifying certain upper bounds
on $\|y\|_\infty$ and lower bounds on $\|y\|$ and for $E=\spn\{e_j\}_{j\in\supp y}$, the distance
$\dist\bigl(\trunk{\tilde A}{H}y,\thesubspace{\tilde A}{\tilde B}{H}{E}\bigr)$ is large with an overwhelming probability.
This condition demands a ``rich'' structure from $\trunk{\tilde A}{H}$; consequently, the set $H$
cannot be very small in diameter. On the other hand, the ``upper'' restrictions on $H$ are dictated by the necessity to
control the norm of $\trunk{\tilde A}{H}$. Thus, we have to find a balance between the two requirements.

In order to estimate the distance between the random vector $\trunk{\tilde A}{H}y$
and the random subspace $\thesubspace{\tilde A}{\tilde B}{H}{E}$, we will use Corollary~\ref{RV extension}.
However, since in general $\thesubspace{\tilde A}{\tilde B}{H}{E}$ is {\it dependent}
(in probabilistic sense) on $\trunk{\tilde A}{H}y$, an immediate application of the corollary is not possible;
instead, we will combine it with a conditioning argument, which is presented in the next section.

\section{The distribution of $\dist\bigl(\trunk{A}{H}y,\thesubspace{A}{B}{H}{E}\bigr)$}\label{sec dec}

Assume that we are given $\delta>1$, $N,n\in\N$ with $N\ge\delta n$, a random $N\times n$ matrix $A$
with i.i.d.\ entries, a non-random $N\times n$ matrix $B$ and a Borel subset $H\subset \R$ with $\P\{a_{11}\in H\}>0$.
The purpose of this section is to study the distribution
of the distance between a random vector $\trunk{A}{H}y$ and the random subspace
$\thesubspace{A}{B}{H}{E}=(A+B)(E^\perp)+(\trunk{A}{\overline H}+B)(E)$,
where $E=\spn\{e_j\}_{j\in\supp y}$. We give {\it sufficient} conditions on $A$, $H$ and $y$ which
guarantee that $\dist\bigl(\trunk{A}{H}y,\thesubspace{A}{B}{H}{E}\bigr)$ is large
with a large probability (Proposition~\ref{distance estimate}).
Note that generally $\trunk{A}{H}y$ and $\thesubspace{A}{B}{H}{E}$ are {\it dependent}.
In order to overcome this problem, we apply a decoupling argument.

We adopt the following notation:
For any subset $W\subset\{1,2,\dots,N\}\times\{1,2,\dots,n\}$ let
$$\Omega_W=\bigl\{\omega\in\Omega:\,a_{ij}(\omega)\in H\mbox{ for all }(i,j)\in W
\mbox{ and }a_{ij}(\omega)\in {\overline H}\mbox{ for all }(i,j)\notin W\bigr\}.$$

Given an event $\mathcal E\subset\Omega$ with $\P(\mathcal E)>0$,
we denote by $(\mathcal E,\Sigma_{\mathcal E},\P_{\mathcal E})$
the probability space where the $\sigma$-algebra $\Sigma_{\mathcal E}$ of
subsets of $\mathcal E$ is naturally induced by the $\sigma$-algebra $\Sigma$ on $\Omega$,
and $\P_{\mathcal E}$ is defined by $\P_{\mathcal E}(K)=\P(\mathcal E)^{-1}\P(K)$ ($K\in \Sigma_{\mathcal E}$).

\begin{lemma}[Conditional independence]\label{decoupl lemma}
Let $A$, $B$ and $H$ be as above, $y\in\R^n$, $E=\spn\{e_j\}_{j\in\supp y}$ and let
$W\subset\{1,2,\dots,N\}\times\{1,2,\dots,n\}$ be such that $\P(\Omega_W)>0$.
Then the random vector $\trunk{A}{H}y$ in $\R^N$ and
the random subspace $\thesubspace{A}{B}{H}{E}\subset\R^N$
are conditionally independent given event $\Omega_W$. Moreover, the coordinates of $\trunk{A}{H}y$ are
conditionally independent given $\Omega_W$.
\end{lemma}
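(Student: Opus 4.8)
The plan is to unravel the definitions and show that, conditioned on $\Omega_W$, the two objects in question depend on disjoint sets of entries of $A$, after which conditional independence follows from the (unconditional) independence of the entries of $A$ together with the product structure of the event $\Omega_W$. First I would observe that $\trunk{A}{H}y$ depends only on the entries $a_{ij}$ with $j\in\supp y$, and in fact, once we condition on $\Omega_W$, it depends only on $a_{ij}$ for those pairs $(i,j)\in W$ with $j\in\supp y$: indeed, for $(i,j)\notin W$ we have $a_{ij}\in\overline H$ on $\Omega_W$, so $\trunk{(a_{ij})}{H}=0$ there, and hence $(\trunk{A}{H}y)_i=\sum_{j\in\supp y}\trunk{(a_{ij})}{H}\,y_j=\sum_{j:\,(i,j)\in W,\ j\in\supp y}a_{ij}y_j$ on $\Omega_W$. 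Next I would examine $\thesubspace{A}{B}{H}{E}=(A+B)(E^\perp)+(\trunk{A}{\overline H}+B)(E)$ with $E=\spn\{e_j\}_{j\in\supp y}$, so $E^\perp=\spn\{e_j\}_{j\notin\supp y}$. The first summand $(A+B)(E^\perp)$ is spanned by the columns $\col_j(A)+\col_j(B)$ for $j\notin\supp y$, hence depends only on entries $a_{ij}$ with $j\notin\supp y$. The second summand $(\trunk{A}{\overline H}+B)(E)$ is spanned by $\trunk{A}{\overline H}$-columns (plus non-random columns of $B$) for $j\in\supp y$, and on $\Omega_W$ the entry $\trunk{(a_{ij})}{\overline H}$ equals $a_{ij}$ when $(i,j)\notin W$ and equals $0$ when $(i,j)\in W$; so on $\Omega_W$ this summand depends only on the entries $a_{ij}$ with $j\in\supp y$ and $(i,j)\notin W$.

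Thus, writing $W_0=\{(i,j)\in W:\,j\in\supp y\}$, on $\Omega_W$ the vector $\trunk{A}{H}y$ is a (measurable, deterministic) function of $\{a_{ij}:(i,j)\in W_0\}$, while the subspace $\thesubspace{A}{B}{H}{E}$ is a function of $\{a_{ij}:j\notin\supp y\}\cup\{a_{ij}:j\in\supp y,\ (i,j)\notin W\}$, i.e.\ of $\{a_{ij}:(i,j)\notin W_0\}$ — and these two index sets $W_0$ and its complement are disjoint. The key point now is that the conditioning event $\Omega_W$ itself factorizes along this partition: $\Omega_W=\Omega_W'\cap\Omega_W''$, where $\Omega_W'=\{a_{ij}\in H\ \forall(i,j)\in W_0\ \text{and}\ a_{ij}\in\overline H\ \forall (i,j)\notin W,\ j\in\supp y\}$ depends only on entries with column index in $\supp y$, and $\Omega_W''=\{a_{ij}\in H\ \forall(i,j)\in W\setminus W_0\ \text{and}\ a_{ij}\in\overline H\ \forall (i,j)\notin W,\ j\notin\supp y\}$ depends only on entries with column index outside $\supp y$. (One can partition $\Omega_W'$ further once we restrict to $W_0$, but this coarse split already suffices.)

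The conclusion follows from a standard fact about conditioning: if $X$ is $\sigma(\{a_{ij}:(i,j)\in W_0\})$-measurable, $Y$ is $\sigma(\{a_{ij}:(i,j)\notin W_0\})$-measurable, the two $\sigma$-algebras are independent, and the conditioning event $\mathcal G$ factors as $\mathcal G=\mathcal G_1\cap\mathcal G_2$ with $\mathcal G_1$ in the first $\sigma$-algebra and $\mathcal G_2$ in the second, then $X$ and $Y$ are independent under $\P_{\mathcal G}$. For our setting this gives that $\trunk{A}{H}y$ and $\thesubspace{A}{B}{H}{E}$, as functions on $\Omega_W$ of disjoint groups of independent entries with a product conditioning event, are conditionally independent given $\Omega_W$; and similarly, since each coordinate $(\trunk{A}{H}y)_i$ depends (on $\Omega_W$) only on $\{a_{ij}:(i,j)\in W_0\}$ with distinct rows $i$ using entries in distinct rows, and $\Omega_W'$ further factors over rows, the coordinates of $\trunk{A}{H}y$ are mutually conditionally independent given $\Omega_W$. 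The main thing to get right — and the only place requiring care rather than routine bookkeeping — is the verification that $\Omega_W$ genuinely splits as a product over the relevant coordinate blocks, so that conditioning does not introduce spurious dependence between the two blocks; everything else is an application of independence of the i.i.d.\ entries and measurability of the maps involved. A subtlety worth flagging is that $\thesubspace{A}{B}{H}{E}$ is a random element of $\bigsqcup_{k=0}^n\Gras(k,N)$ of variable dimension, so "measurable function of a group of entries" and "conditional independence" should be interpreted in the sense set up in Section~\ref{prelim sec}; this does not affect the argument but should be stated explicitly.
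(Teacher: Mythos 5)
Your overall strategy is the same as the paper's: both proofs reduce to the observation that conditioning on $\Omega_W$ preserves independence of the entries, because $\Omega_W$ is a product of single-entry events $\{a_{ij}\in H\}$ or $\{a_{ij}\in\overline{H}\}$, and then note that on $\Omega_W$ the vector $\trunk{A}{H}y$ and the subspace $\thesubspace{A}{B}{H}{E}$ are measurable functions of disjoint groups of entries. The paper verifies the conditional independence of all the $a_{ij}$ by a direct computation; you instead invoke a general ``conditioning on a product event'' fact --- the same idea in a different package.

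There is, however, a slip in the factorization you display. You correctly identify the partition that matters: on $\Omega_W$, the vector $\trunk{A}{H}y$ is a function of the entries indexed by $W_0=\{(i,j)\in W:\ j\in\supp y\}$ while $\thesubspace{A}{B}{H}{E}$ is a function of the entries indexed by the complement $W_0^c$. But the factorization $\Omega_W=\Omega_W'\cap\Omega_W''$ you then write down is along a \emph{different} partition (columns in $\supp y$ versus the rest). Your $\Omega_W'$ is not measurable with respect to $\{a_{ij}:(i,j)\in W_0\}$, since it also constrains the entries with $j\in\supp y$ but $(i,j)\notin W$, which lie in $W_0^c$; and symmetrically $\thesubspace{A}{B}{H}{E}$ is not a function of entries with $j\notin\supp y$ alone. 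So the hypotheses of your ``standard fact'' are not met by the split as displayed, and the parenthetical claim that the coarse split already suffices is not right. The fix is immediate: since $\Omega_W$ is a product over every individual pair $(i,j)$, it factorizes along \emph{any} partition of the index set, in particular along $W_0$ versus $W_0^c$. Take $\mathcal{G}_1=\bigcap_{(i,j)\in W_0}\{a_{ij}\in H\}$ and $\mathcal{G}_2=\bigcap_{(i,j)\notin W_0}\{a_{ij}\in H\text{ if }(i,j)\in W,\ a_{ij}\in\overline{H}\text{ otherwise}\}$. With this split your argument goes through, and running the same reasoning with $W_0$ further partitioned by rows gives the conditional independence of the coordinates of $\trunk{A}{H}y$.
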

\begin{proof}
If $\P\{a_{11}\in \overline H\}=0$ then the assumption $\P(\Omega_W)>0$ necessarily implies that
$W=\{1,2,\dots,N\}\times\{1,2,\dots,n\}$ and $\Omega_W=\Omega$ (up to a set of $\P$-measure zero).
At the same time, in this case
$\thesubspace{A}{B}{H}{E}=(A+B)(E^\perp)+B(E)$ a.s., hence $\thesubspace{A}{B}{H}{E}$ and $\trunk{A}{H}y$
are independent on $\Omega=\Omega_W$, and we get the statement.

Now, assume that $\P\{a_{11}\in \overline H\}\neq 0$.
It is enough to check that
\begin{align}
&\mbox{the random variables}\;\;\trunk{(a_{ij})}{H},\;\trunk{(a_{ij})}{\overline H}\;\;(1\le i\le N,\;1\le j\le n)\nonumber\\
&\mbox{are jointly conditionally independent given}\;\;\Omega_W.\label{cond ind aux 8362}
\end{align}
Note that for all $\omega\in\Omega_W$ we have
$\trunk{(a_{ij})}{\overline H}(\omega)=0$ for $(i,j)\in W$ and $\trunk{(a_{ij})}{H}(\omega)=0$ for $(i,j)\notin W$.
Hence, to verify \eqref{cond ind aux 8362} it is sufficient to prove that $nN$ variables
$$\trunk{(a_{ij})}{H},\;(i,j)\in W;\;\;\trunk{(a_{ij})}{\overline H},\;(i,j)\notin W$$
are jointly conditionally independent given $\Omega_W$.
But for $(i,j)\in W$ the $H$-part of $a_{ij}$ satisfies $\trunk{(a_{ij})}{H}=a_{ij}$ {\it everywhere on $\Omega_W$}
and, similarly, for $(i,j)\notin W$, we have $\trunk{(a_{ij})}{\overline H}=a_{ij}$ everywhere on $\Omega_W$.
Hence, once we verify conditional independence for $a_{ij}$ ($1\le i\le N$, $1\le j\le n$) given $\Omega_W$,
then we immediately get \eqref{cond ind aux 8362}.
For any Borel subsets $K_{ij}\subset\R$ ($1\le i\le N$, $1\le j\le n$) we have
\begin{align*}
&\prod\limits_{(i,j)}\P_{\Omega_W}\bigl\{a_{ij}\in K_{ij}\bigr\}\\
&=\prod\limits_{(i,j)}\bigl(\P(\Omega_W)^{-1}\P\bigl\{\omega\in\Omega_W:\,a_{ij}(\omega)\in K_{ij}\bigr\}\bigr)\\
&=\prod\limits_{(i,j)\in W}\bigl(\P\bigl\{a_{ij}\in H\bigr\}^{-1}\P\bigl\{a_{ij}\in H\cap K_{ij}\bigr\}\bigr)\,
\prod\limits_{(i,j)\notin W}\bigl(\P\bigl\{a_{ij}\in \overline{H}\bigr\}^{-1}\P\bigl\{a_{ij}\in\overline{H}\cap K_{ij}\bigr\}\bigr)\\
&=\P(\Omega_W)^{-1}\P\bigl\{a_{ij}\in H\cap K_{ij}\mbox{ for all $(i,j)\in W$ and }a_{ij}\in \overline{H}\cap K_{ij}
\mbox{ for all $(i,j)\notin W$}\bigr\}\\
&=\P_{\Omega_W}\bigl\{a_{ij}\in K_{ij}:\,1\le i\le N,\,1\le j\le n\bigr\},
\end{align*}
so $a_{ij}$ ($1\le i\le N$, $1\le j\le n$) are conditionally independent given $\Omega_W$.
\end{proof}

Lemma~\ref{decoupl lemma} shows that
Corollary~\ref{RV extension} can be applied to $\trunk{A}{H}y$ and the subspace $\thesubspace{A}{B}{H}{E}$
``inside'' each $\Omega_W$. Hence, to give a satisfactory lower estimate for $\dist\bigl(\trunk{A}{H}y,\thesubspace{A}{B}{H}{E}\bigr)$
on entire $\Omega$, it is enough to verify that there is a subset $M\subset 2^{\{1,2,\dots,N\}\times\{1,2,\dots,n\}}$ such that
the $\P$-measure of the union of $\Omega_W$'s ($W\in M$) is close to $1$ and 
for each $W\in M$, the restriction of the vector $\trunk{A}{H}y$ to $\Omega_W$
has sufficiently ``spread'' coordinates.
Of course, such a set $M$ may exist only under certain assumptions on $A$, $H$ and $y$.
In Lemma~\ref{conc in matrix}, we formulate those assumptions using 
random variables that agree on a part of the probability space and are independent when
restricted to the other part of $\Omega$. Let us remark that, whereas the use of such variables has some
advantages (in our opinion), it should not be regarded as a necessary ingredient of the proof.

Let $\xi,\xi'$ be two random variables such that
$\P\{\xi\in H\}>0$. We say that 
{\it $\xi,\xi'$ are conditionally i.i.d.\ given event $\{\omega\in\Omega:\,\xi(\omega)\in H\}$ and identical on
$\{\omega\in\Omega:\,\xi(\omega)\in\overline{H}\}$} if the following is true:
setting $\mathcal E=\{\omega\in\Omega:\,\xi(\omega)\in H\}$, the restrictions
of $\xi,\xi'$ to the probability space $(\mathcal E,\Sigma_{\mathcal E},\P_{\mathcal E})$ are i.i.d.\ and
$\xi(\omega)=\xi'(\omega)$ for $\omega\in \Omega\setminus\mathcal E$. The definition implies that
$\xi'$ has the same individual distribution (on $\Omega$) as $\xi$ and for any Borel subsets $K,K'\subset\R$
$$\P\bigl\{(\xi,\xi')\in K\times K'\bigr\}=
\frac{\P\{\xi\in H\cap K\}\P\{\xi\in H\cap K'\}}{\P\{\xi\in H\}}+\P\{\xi\in \overline{H}\cap K\cap K'\};$$
in particular, $\P\{(\xi,\xi')\in H\times \overline{H}\}=\P\{(\xi,\xi')\in \overline{H}\times H\}=0$.
Note that
$\trunk{\xi}{\overline H}$ and $\trunk{\xi'}{\overline H}$ are equal a.s.\ on $\Omega$.
It is a trivial observation that $\trunk{\xi}{H}-\trunk{\xi'}{H}$ is symmetrically distributed.

For any event $\mathcal E\subset\Omega$ with $\P(\mathcal E)>0$ and any random variable $\xi$ on $\Omega$,
let $\concf_{\mathcal E}(\xi,\cdot)$ be the concentration function of the restriction of $\xi$ to the probability space
$(\mathcal E,\Sigma_{\mathcal E},\P_{\mathcal E})$.

\begin{lemma}\label{conc in matrix}
Let $H$ be a Borel subset of $\R$; $N\ge\delta n$ for some
$\delta>1$ and let $A=(a_{ij})$ be an $N\times n$ random matrix with i.i.d.\ entries and $\P\{a_{11}\in H\}>0$.
Further, let $A'=(a_{ij}')$ be an $N\times n$ random matrix having the same distribution as $A$ such that
$2$-dimensional vectors $(a_{ij},a_{ij}')$ ($1\le i\le N$, $1\le j\le n$)
are i.i.d.\ and for any admissible $i$ and $j$ the variables
$a_{ij}$ and $a_{ij}'$ are conditionally i.i.d.\ given event $\{\omega\in\Omega:\,a_{ij}(\omega)\in H\}$ and identical on
$\{\omega\in\Omega:\,a_{ij}(\omega)\in\overline H\}$.
Let $y=(y_1,y_2,\dots,y_n)\in\R^n$ and $s>0$ be such that
\begin{equation}\label{new app aux 125}
\P\Bigl\{\Bigl|\sum\limits_{j=1}^n \bigl(\trunk{(a_{ij})}{H}-\trunk{(a_{ij}')}{H}\bigr) y_j\Bigr|>s\Bigr\}\ge \delta^{-1/4},\;\;i=1,2,\dots,N.
\end{equation}
Define $M$ as the collection of all subsets $W\subset \{1,2,\dots,N\}\times\{1,2,\dots,n\}$ satisfying
$$\P(\Omega_W)>0\;\;\mbox{and}\;\;
\Bigl|\Bigl\{i\in\{1,2,\dots,N\}:\,\concf_{\Omega_W}\Bigl(\sum\limits_{j=1}^n
\trunk{(a_{ij})}{H}y_j,\frac{s}{2}\Bigr)\le 1-\tau\Bigr\}\Bigr|\ge N\delta^{-1/2}$$
with $\tau=\frac{1}{2}\bigl(\delta^{-1/4}-\delta^{-1/3}\bigr)$.
Then
$$\P\Bigl(\bigcup_{W\in M}\Omega_W\Bigr)\ge 1-\exp(-w_{\ref{conc in matrix}} N),$$
where $w_{\ref{conc in matrix}}>0$ depends only on $\delta$.
\end{lemma}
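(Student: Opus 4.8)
The plan is to produce the set $M$ by a union bound over the small ``exceptional'' region of the probability space, using condition \eqref{new app aux 125} row by row. The key observation is that the event $\Omega_W$, for fixed $i$, only constrains the membership pattern $a_{ij}\in H$ versus $a_{ij}\in\overline H$ of the entries of row $i$; conditionally on such a pattern, the $H$-parts $\trunk{(a_{ij})}{H}y_j$ in that row become conditionally independent (this is exactly Lemma~\ref{decoupl lemma} applied to a single row, or can be re-derived directly as there). So I would introduce, for each row $i$, a Bernoulli-type random variable recording which entries of row $i$ fell into $H$; call the resulting ``pattern'' of row $i$ an element of $\{0,1\}^n$. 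The event $\Omega_W$ is the intersection over all rows of the events that row $i$ has the prescribed pattern $W_i$.

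First I would fix $i$ and analyze when the conditional concentration function $\concf_{\Omega_W}\bigl(\sum_j \trunk{(a_{ij})}{H}y_j,\,s/2\bigr)$ is $\le 1-\tau$. Introduce the symmetrized row sum $Z_i=\sum_j\bigl(\trunk{(a_{ij})}{H}-\trunk{(a_{ij}')}{H}\bigr)y_j$; by the conditional-i.i.d.\ construction of $A'$, conditionally on the pattern of row $i$ the two sums $\sum_j\trunk{(a_{ij})}{H}y_j$ and $\sum_j\trunk{(a_{ij}')}{H}y_j$ are i.i.d., so $Z_i$ is (conditionally, and hence unconditionally) a symmetrization of the $H$-part row sum. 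A standard symmetrization/small-ball fact gives: if $\P\{|Z_i|>s\}$ is bounded below, then the conditional concentration function of the one-sided sum at scale $s/2$ is bounded away from $1$ on a large-probability set of patterns. Concretely, $\P\{|Z_i|>s\mid \text{pattern }W_i\}\le 2(1-\concf_{\Omega_{W_i}}(\sum_j\trunk{(a_{ij})}{H}y_j,s/2))$-type inequality, wait, more precisely: if $\xi,\xi'$ are conditionally i.i.d.\ given the pattern and $\concf(\xi,s/2)\ge 1-\tau$ conditionally, then $\P\{|\xi-\xi'|>s\mid\text{pattern}\}\le 2\tau$; contrapositively, rows whose conditional concentration function exceeds $1-\tau$ have $\P\{|Z_i|>s\mid\text{pattern}\}\le 2\tau$. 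Averaging over patterns and using \eqref{new app aux 125} together with the choice $\tau=\tfrac12(\delta^{-1/4}-\delta^{-1/3})$, I get that for each $i$ the probability (over the pattern of row $i$) that row $i$ is ``good'' (conditional concentration function $\le 1-\tau$) is at least $\delta^{-1/3}$.

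Next I would assemble the rows. Since the patterns of distinct rows are independent (the entries of $A$ are independent across rows), the number $G$ of good rows stochastically dominates a $\mathrm{Bin}(N,\delta^{-1/3})$ random variable; here ``good'' is a property of a row's pattern alone, so $\{W:W\in M\}$ is precisely $\{W:\#\{\text{good rows of }W\}\ge N\delta^{-1/2}\}$. Now $\delta^{-1/2}<\delta^{-1/3}$ since $\delta>1$, so a Chernoff bound for the binomial gives $\P\{G< N\delta^{-1/2}\}\le\exp(-w_{\ref{conc in matrix}}N)$ for some $w_{\ref{conc in matrix}}>0$ depending only on $\delta$ (through the gap between $\delta^{-1/3}$ and $\delta^{-1/2}$). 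Finally $\bigcup_{W\in M}\Omega_W$ is exactly the event $\{G\ge N\delta^{-1/2}\}$ up to a null set, so $\P(\bigcup_{W\in M}\Omega_W)\ge 1-\exp(-w_{\ref{conc in matrix}}N)$, as claimed. One should also dispose of the degenerate case $\P\{a_{11}\in\overline H\}=0$ separately (then $\Omega$ itself is the only $W$ with positive measure, and \eqref{new app aux 125} forces $\Omega\in M$ for $N$ large), just as in Lemma~\ref{decoupl lemma}.

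The main obstacle I expect is the ``per-row'' step: cleanly converting the lower bound $\P\{|Z_i|>s\}\ge\delta^{-1/4}$ on the \emph{symmetrized} sum into the statement that, with probability $\ge\delta^{-1/3}$ over the row pattern, the \emph{non-symmetrized} conditional sum has concentration function $\le1-\tau$ at the halved scale $s/2$. This requires being careful about the direction of the symmetrization inequality (passing from $|\xi-\xi'|$ at scale $s$ to $\xi$ at scale $s/2$ costs the factor of $2$ in the radius and introduces the factor $2$ in probability that is absorbed by the definition $\tau=\tfrac12(\delta^{-1/4}-\delta^{-1/3})$), and about averaging a conditional bound over the pattern distribution. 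Everything after that is a routine independence-across-rows Chernoff estimate.
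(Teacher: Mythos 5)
Your proposal is correct and follows essentially the same route as the paper: analyze each row's $H$/$\overline H$ pattern separately, use the conditional i.i.d.\ structure of $A,A'$ and the symmetrization inequality (comparing the concentration of the one-sided sum at scale $s/2$ to the small-ball probability of its symmetrization at scale $s$) to show each row is ``good'' with probability at least $\delta^{-1/3}$, and then apply independence across rows plus a Hoeffding/Chernoff bound to get at least $N\delta^{-1/2}$ good rows with probability $1-\exp(-wN)$. Your phrasing via the contrapositive of the symmetrization step and via stochastic domination by $\mathrm{Bin}(N,\delta^{-1/3})$ is equivalent to the paper's direct decomposition $\P\{|Z_i|>s\}=\sum_J\P_{\Omega_J^i}\{|Z_i|>s\}\P(\Omega_J^i)\le 2\tau+\P(\mathcal E_i)$, and the degenerate case $\P\{a_{11}\in\overline H\}=0$ is in fact covered automatically by the general argument (the only positive-measure pattern then necessarily lies in every $L_i$).
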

\begin{proof}
For each $i=1,2,\dots,N$ and $J\subset\{1,2,\dots,n\}$ let
$$\Omega_J^i=\bigl\{\omega\in\Omega:\,a_{ij}(\omega)\in H\mbox{ for all }j\in J
\mbox{ and }a_{ij}(\omega)\in {\overline H}\mbox{ for all }j\notin J\bigr\},$$
and for $i=1,2,\dots,N$ define
$$L_i=\Bigl\{J\subset \{1,2,\dots,n\}:\,\P(\Omega_J^i)>0\mbox{ and }
\concf_{\Omega_J^i}\Bigl(\sum\limits_{j=1}^n \trunk{(a_{ij})}{H}y_j,\frac{s}{2}\Bigr)\le 1-\tau\Bigr\};\;
\mathcal E_i=\bigcup\limits_{J\in L_i}\Omega_J^i.$$
It is not difficult to see that the events $\mathcal E_i\subset\Omega$ ($i=1,2,\dots,N$)
are independent in view of independence of the entries of $A$.

Fix for a moment any $i\in\{1,2,\dots,N\}$. One can verify that
for any $j\in\{1,2,\dots,n\}$ and $J\subset\{1,2,\dots,n\}$ the variables $\trunk{(a_{ij})}{H}$
and $\trunk{(a_{ij}')}{H}$ are i.i.d.\ given event $\Omega_J^i$.
It follows that
\begin{equation}\label{cond indep 123}
\sum_{j=1}^n \trunk{(a_{ij})}{H}y_j\;\mbox{ and }\;\sum_{j=1}^n \trunk{(a_{ij}')}{H}y_j
\;\mbox{ are i.i.d.\ given $\Omega_J^i$, for all $J\subset\{1,2,\dots,n\}$}.
\end{equation}
Take any subset $J\subset\{1,2,\dots,n\}$ satisfying
\begin{equation}\label{new app aux 1351}
\P(\Omega_J^i)>0\mbox{ and }
\P_{\Omega_J^i}\Bigl\{\Bigl|\sum\limits_{j=1}^n \bigl(\trunk{(a_{ij})}{H}-\trunk{(a_{ij}')}{H}\bigr) y_j\Bigr|> s\Bigr\}\ge 2\tau.
\end{equation}
For all $\lambda\in\R$ we have, in view of \eqref{cond indep 123},
\begin{align*}
\P_{\Omega_J^i}&\Bigl\{\lambda-\frac{s}{2}\le\sum\limits_{j=1}^n \trunk{(a_{ij})}{H}y_j\le\lambda+\frac{s}{2}\Bigr\}^2\\
&=\P_{\Omega_J^i}\Bigl\{\lambda-\frac{s}{2}\le\sum\limits_{j=1}^n \trunk{(a_{ij})}{H}y_j\le\lambda+\frac{s}{2}\mbox{ and }
\lambda-\frac{s}{2}\le\sum\limits_{j=1}^n \trunk{(a_{ij}')}{H}y_j\le\lambda+\frac{s}{2}\Bigr\}\\
&\le\P_{\Omega_J^i}\Bigl\{\Bigl|\sum\limits_{j=1}^n \bigl(\trunk{(a_{ij})}{H}-\trunk{(a_{ij}')}{H}\bigr) y_j\Bigr|\le s\Bigr\}\\
&\le 1-2\tau,
\end{align*}
implying
$$\concf_{\Omega_J^i}\Bigl(\sum\limits_{j=1}^n \trunk{(a_{ij})}{H}y_j,\frac{s}{2}\Bigr)\le \sqrt{1-2\tau}\le 1-\tau.$$
Thus, any $J$ satisfying \eqref{new app aux 1351} belongs to $L_i$.
Clearly,
\begin{equation*}\label{new app aux 135}
\P\Bigl\{\Bigl|\sum\limits_{j=1}^n \bigl(\trunk{(a_{ij})}{H}-\trunk{(a_{ij}')}{H}\bigr) y_j\Bigr|
> s\Bigr\}=\sum\limits_{J}\P_{\Omega_J^i}\Bigl\{\Bigl|\sum\limits_{j=1}^n
\bigl(\trunk{(a_{ij})}{H}-\trunk{(a_{ij}')}{H}\bigr) y_j\Bigr|>s\Bigr\}\P(\Omega_J^i),
\end{equation*}
where the summation is taken over $J\subset\{1,2,\dots,n\}$ satisfying $\P(\Omega_J^i)>0$.
Hence, in view of \eqref{new app aux 125} and the above observations we get
\begin{align*}
\delta^{-1/4}&\le \sum\limits_{J}\P_{\Omega_J^i}\Bigl\{\Bigl|\sum\limits_{j=1}^n
\bigl(\trunk{(a_{ij})}{H}-\trunk{(a_{ij}')}{H}\bigr) y_j\Bigr|>s\Bigr\}\P(\Omega_J^i)\\
&\le \sum\limits_{J\in L_i}\P(\Omega_J^i)+2\tau\sum\limits_{J\notin L_i}\P(\Omega_J^i)\\
&\le2\tau+\P(\mathcal E_i),
\end{align*}
implying $\P(\mathcal E_i)\ge\delta^{-1/3}$.

We have shown that the events $\mathcal E_i$ ($i=1,2,\dots,N$) are independent and $\P(\mathcal E_i)\ge\delta^{-1/3}$ for each $i$.
Now, setting
$$\mathcal E=\bigl\{\omega\in\Omega:\,\bigl|\bigl\{i\in\{1,2,\dots,N\}:\,\omega\in\mathcal E_i\bigr\}\bigr|\ge N\delta^{-1/2}\bigr\},$$
we obtain by Bernstein's (or Hoeffding's) inequality
$\P(\mathcal E)\ge 1-\exp(-w_{\ref{conc in matrix}} N)$, where $w_{\ref{conc in matrix}}>0$
depends only on $\delta$. Take any $W\subset \{1,2,\dots,N\}\times\{1,2,\dots,n\}$ such that $\P(\mathcal E\cap \Omega_W)>0$.
Then, by the construction of $\mathcal E$, there is a subset $I\subset \{1,2,\dots,N\}$
of cardinality at least $N\delta^{-1/2}$ and sets $J_i\in L_i$ ($i\in I$)
such that $\Omega_W\subset\Omega_{J_i}^i$ for all $i\in I$. For every $i\in I$ by the definition of $L_i$ we have
$$\concf_{\Omega_{J_i}^i}\Bigl(\sum\limits_{j=1}^n \trunk{(a_{ij})}{H}y_j,\frac{s}{2}\Bigr)\le 1-\tau,$$
hence $W\subset M$. The argument implies
$\mathcal E\subset\bigcup_{W\in M}\Omega_W$ and the result follows.
\end{proof}

Next, we combine the result of Lemma~\ref{conc in matrix} with Corollary~\ref{RV extension}:

\begin{lemma}\label{wrap signum lemma}
Let $N,n,\delta$, $H$, $A,A'$, $y$ and $s$ be exactly as in Lemma~\ref{conc in matrix}
and $B$ be a non-random $N\times n$ matrix.
Then
\begin{align*}
\P\bigl\{\dist\bigl(\trunk{A}{H}y, \thesubspace{A}{B}{H}{E}\bigr)\le s h_{\ref{wrap signum lemma}}\sqrt{N}\bigr\}
\le 2\exp\bigl(-w_{\ref{wrap signum lemma}}N\bigr),
\end{align*}
where $E=\spn\{e_j\}_{j\in\supp y}$ and $h_{\ref{wrap signum lemma}}>0$, $w_{\ref{wrap signum lemma}}>0$ depend only on $\delta$.
\end{lemma}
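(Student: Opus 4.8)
The plan is to combine three ingredients already in place: the large‑probability decomposition of $\Omega$ from Lemma~\ref{conc in matrix}, the conditional independence of Lemma~\ref{decoupl lemma}, and the small‑ball estimate of Corollary~\ref{RV extension}. Write $X=\trunk{A}{H}y$ and $V=\thesubspace{A}{B}{H}{E}$ with $E=\spn\{e_j\}_{j\in\supp y}$; recall $X_i=\sum_j\trunk{(a_{ij})}{H}\,y_j$ and $\dim V\le n$. First I would invoke Lemma~\ref{conc in matrix}: it produces a family $M$ of index sets $W$ with pairwise disjoint $\Omega_W$, with $\P\bigl(\bigcup_{W\in M}\Omega_W\bigr)\ge 1-\exp(-w_{\ref{conc in matrix}}N)$, and for each $W\in M$ a set $I_W\subset\{1,\dots,N\}$ with $|I_W|\ge N\delta^{-1/2}$ such that $\concf_{\Omega_W}\!\bigl(X_i,\tfrac s2\bigr)\le 1-\tau$ for every $i\in I_W$, where $\tau=\tau(\delta)>0$. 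It then suffices to bound $\P_{\Omega_W}\{\dist(X,V)\le h_{\ref{wrap signum lemma}}s\sqrt N\}$ uniformly over $W\in M$ and sum.

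Fix $W\in M$. The obstacle here is that $V$ is a \emph{random} subspace and is not independent of $X$ on $\Omega_W$. Lemma~\ref{decoupl lemma} says that, given $\Omega_W$, the vector $X$ is independent of $V$ and the coordinates $X_1,\dots,X_N$ are mutually independent; together these give that $X_1,\dots,X_N$ and $V$ are \emph{jointly} independent given $\Omega_W$. So I would pass to the regular conditional probability given $\sigma(V)$ inside $(\Omega_W,\P_{\Omega_W})$: conditionally, $V$ is a fixed subspace, while $X_1,\dots,X_N$ remain independent with their $\Omega_W$‑laws, so in particular $\concf(X_i,\tfrac s2)\le 1-\tau$ still holds for $i\in I_W$. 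To make the conditioned subspace small‑codimensional on the coordinates of $I_W$, enlarge it to $V'=V+\spn\{e_i:\,i\notin I_W\}$; this is $\sigma(V)$‑measurable, $\dim V'\le n+(N-|I_W|)$, and $V'^\perp\subset\spn\{e_i:\,i\in I_W\}$, so $d':=\dim V'^\perp\ge |I_W|-n\ge N\delta^{-1/2}-N/\delta=c_\delta N$ with $c_\delta:=\delta^{-1/2}-\delta^{-1}>0$, using $N\ge\delta n$. Moreover $\dist(X,V)\ge\dist(X,V')=\bigl\|\Proj_{V'^\perp}\!\bigl((X_i)_{i\in I_W}\bigr)\bigr\|$ since $V'^\perp$ is orthogonal to all $e_i$ with $i\notin I_W$.

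Now I would fix $\ell=\ell(\delta)\in\N$ large enough that $C_{\ref{RV conc lemma}}C_{\ref{rogozin lemma}}/\sqrt{\ell\tau}\le\tfrac12$ (possible since $\tau$ depends only on $\delta$) and apply Corollary~\ref{RV extension}, conditionally on $\sigma(V)$ inside $\Omega_W$, to the random vector $(X_i)_{i\in I_W}$ (independent coordinates, $\concf(\cdot,\tfrac s2)\le 1-\tau$) and the now non‑random $d'$‑dimensional subspace $V'^\perp$, obtaining
\[
\P_{\Omega_W}\Bigl(\bigl\|\Proj_{V'^\perp}\!\bigl((X_i)_{i\in I_W}\bigr)\bigr\|\le\tfrac s2\tfrac{\sqrt{d'}}{\ell}\ \Big|\ \sigma(V)\Bigr)\le 2^{-d'/\ell}\le 2^{-c_\delta N/\ell}\quad\text{a.s.}
\]
Taking expectation, using $d'\ge c_\delta N$ and $\dist(X,V)\ge\|\Proj_{V'^\perp}((X_i)_{i\in I_W})\|$, and setting $h_{\ref{wrap signum lemma}}:=\sqrt{c_\delta}/(2\ell)$ gives $\P_{\Omega_W}\{\dist(X,V)\le h_{\ref{wrap signum lemma}}s\sqrt N\}\le 2^{-c_\delta N/\ell}$. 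Summing over the disjoint $\Omega_W$, $W\in M$, and adding the complementary event from Lemma~\ref{conc in matrix},
\[
\P\{\dist(X,V)\le h_{\ref{wrap signum lemma}}s\sqrt N\}\le\exp(-w_{\ref{conc in matrix}}N)+\sum_{W\in M}\P(\Omega_W)\,2^{-c_\delta N/\ell}\le\exp(-w_{\ref{conc in matrix}}N)+2^{-c_\delta N/\ell},
\]
and choosing $w_{\ref{wrap signum lemma}}\le\min\{w_{\ref{conc in matrix}},(c_\delta/\ell)\ln 2\}$ — further shrunk, if necessary, so that $2\exp(-w_{\ref{wrap signum lemma}}N)\ge 1$ for the finitely many $N$ with $c_\delta N<1$, where $d'$ may fail to be positive and the estimate is then vacuous — yields the claimed bound $2\exp(-w_{\ref{wrap signum lemma}}N)$.

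I expect the middle paragraph to be the crux. Two points need care. First, that after conditioning on $\sigma(V)$ the coordinates $(X_i)_{i\in I_W}$ genuinely remain independent with concentration functions $\le 1-\tau$: this rests on upgrading Lemma~\ref{decoupl lemma} from ``$X$ independent of $V$, coordinates of $X$ independent'' to the joint independence of $X_1,\dots,X_N,V$ given $\Omega_W$ — immediate, but it must be stated — and on the existence of regular conditional probabilities for the Grassmannian‑valued $V$. Second, that the enlarged subspace $V'$ still leaves codimension of order $N$ \emph{among the coordinates indexed by $I_W$}: this is precisely where the strict inequality $\delta>1$ enters, through $N\delta^{-1/2}-N/\delta=c_\delta N>0$. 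Everything else — fixing $\ell$ in terms of $\delta$, converting the concentration‑function bound of Corollary~\ref{RV extension} into a small‑ball bound by taking $\lambda=0$, and the union over $W\in M$ — is routine bookkeeping.
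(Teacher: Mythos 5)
Your proposal is correct and follows essentially the same route as the paper: invoke Lemma~\ref{conc in matrix} to pass to the events $\Omega_W$, $W\in M$; on each $\Omega_W$ use Lemma~\ref{decoupl lemma} to decouple $\trunk{A}{H}y$ from the random subspace, condition on the subspace, and apply Corollary~\ref{RV extension} to the coordinates indexed by $I_W$ with a fixed $\ell\sim 1/\tau$; then union over $W\in M$. The only genuine additions you make are in spelling out what the paper leaves implicit: you explicitly enlarge $V$ to $V'=V+\spn\{e_i:i\notin I_W\}$ so that $V'^{\perp}$ lives among the ``good'' coordinates (the paper simply picks $d=m-n$ and a fixed $n$-dimensional $F$), you explicitly state the joint conditional independence of $X_1,\dots,X_N,V$ rather than just ``$X$ independent of $V$'', and you flag regular conditional probabilities. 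Your worry about the ``finitely many $N$ with $c_\delta N<1$'' is unnecessary --- since $|I_W|\ge N\delta^{-1/2}>N/\delta\ge n$ and both sides are integers, one always has $d'=|I_W|-n\ge 1$ --- but it is harmless.
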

\begin{proof}
Let $M$ and $\tau$ be defined as in Lemma~\ref{conc in matrix} and take any $W\in M$.
Let
$$m=\Bigl|\Bigl\{i\in\{1,2,\dots,N\}:\,\concf_{\Omega_W}\Bigl(\sum\limits_{j=1}^n \trunk{(a_{ij)}}{H}y_j,\frac{s}{2}\Bigr)\le 1-\tau\Bigr\}\Bigr|.$$
By the definition of $M$, we have $m\ge N\delta^{-1/2}\ge \sqrt{\delta}n$, hence, taking
$d=m-n$ and $\ell=4(C_{\ref{RV conc lemma}}C_{\ref{rogozin lemma}})^2/\tau$, by Corollary~\ref{RV extension},
for $\kappa=\delta^{-1/2}-\delta^{-1}$ and any fixed $n$-dimensional subspace $F\subset\R^N$ we obtain
$$\P_{\Omega_W}\Bigl\{\dist\bigl(\trunk{A}{H}y,F\bigr)
\le \frac{s}{2\ell}\sqrt{\kappa N}\Bigr\}\le 2^{-\kappa N/\ell}.$$
By Lemma~\ref{decoupl lemma}, the subspace $\thesubspace{A}{B}{H}{E}=(A+B)(E^\perp)+(\trunk{A}{\overline H}+B)(E)$
and the vector $\trunk{A}{H}y$ are conditionally independent given $\Omega_W$,
hence the above estimate immediately implies
$$\P_{\Omega_W}\Bigl\{\dist\bigl(\trunk{A}{H}y,\thesubspace{A}{B}{H}{E}\bigr)
\le \frac{s}{2\ell}\sqrt{\kappa N}\Bigr\}\le 2^{-\kappa N/\ell}.$$
Since the relation holds for all $W\in M$, in view of Lemma~\ref{conc in matrix} we obtain
\begin{align*}
\P\Bigl\{\dist\bigl(\trunk{A}{H}y,\thesubspace{A}{B}{H}{E}\bigr)
\le \frac{s}{2\ell}\sqrt{\kappa N}\Bigr\}
&\le 2^{-\kappa N/\ell}\,\P\Bigl(\bigcup_{W\in M}\Omega_W\Bigr)+1-\P\Bigl(\bigcup_{W\in M}\Omega_W\Bigr)\\
&\le 2^{-\kappa N/\ell}+\exp(-w_{\ref{conc in matrix}} N),
\end{align*}
and the result follows.
\end{proof}

Finally, we can prove the main result of the section:

\begin{prop}\label{distance estimate}
Let $\delta>1$, $n,N\in\N$, $N\ge\delta n$ and let $A=(a_{ij})$ be an $N\times n$ random matrix with i.i.d.\ entries
and $B$ be any non-random $N\times n$ matrix. Further, for some $d,r>0$ let
$H$ be a Borel subset of $\R$ such that
$H=H_1\cup H_2$ for disjoint Borel sets $H_1$, $H_2$ with
$\dist(H_1,H_2)\ge d$ and
$\min\bigl(\P\{a_{11}\in H_1\},\P\{a_{11}\in H_2\}\bigr)\ge r$.
For arbitrary $t>0$ define
$$h_{\ref{distance estimate}}=\frac{1-\delta^{-1/4}}{C_{\ref{rogozin lemma}}}\sqrt{\frac{r}{8}}td$$
and let $y\in\R^n$ be a vector satisfying
$\|y\|\ge t$, $\|y\|_\infty\le \frac{2h_{\ref{distance estimate}}}{d}$ and $E=\spn\{e_j\}_{j\in\supp y}$.
Then
$$\P\bigl\{\dist\bigl(\trunk{A}{H}y,\thesubspace{A}{B}{H}{E}\bigr)
\le h_{\ref{wrap signum lemma}}h_{\ref{distance estimate}}\sqrt{N}\bigr\}\le 2\exp(-w_{\ref{wrap signum lemma}} N).$$
\end{prop}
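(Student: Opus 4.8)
The plan is to reduce Proposition~\ref{distance estimate} to Lemma~\ref{wrap signum lemma} by verifying its hypothesis, namely the lower bound \eqref{new app aux 125} on the probability that the symmetrized, truncated linear combination exceeds the threshold $s$. First I would introduce the auxiliary matrix $A'$ with the coupling described in Lemma~\ref{conc in matrix}: the pairs $(a_{ij},a_{ij}')$ are i.i.d., and for each entry $a_{ij}$ and $a_{ij}'$ are conditionally i.i.d.\ given $\{a_{ij}\in H\}$ and identical on $\{a_{ij}\in\overline H\}$. Fix a row index $i$; I want to estimate from below the probability that $\bigl|\sum_{j=1}^n(\trunk{(a_{ij})}{H}-\trunk{(a_{ij}')}{H})y_j\bigr|>s$ for an appropriate choice of $s$.

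The key observation is that each difference $\xi_j:=\trunk{(a_{ij})}{H}-\trunk{(a_{ij}')}{H}$ is a symmetric random variable, the $\xi_j$ (over $j$) are independent, and — crucially — $\xi_j$ takes values of absolute value at least $d$ with probability at least $2r(1-r)\ge 2r(\text{something})$: indeed, with probability at least $2r^2$ one of $a_{ij},a_{ij}'$ lands in $H_1$ and the other in $H_2$ (since conditionally on both being in $H$ they are independent and each is in $H_\ell$ with probability $\ge r/\P\{a_{11}\in H\}$), forcing $|\xi_j|\ge\dist(H_1,H_2)\ge d$. Hence $\concf(\xi_j,d')\le 1-2r^2$... more carefully, I would show $1-\concf(\xi_j, d/2)\ge r/2$ or a comparable constant, using that $\xi_j$ is symmetric so no single atom outside a neighborhood of $0$ can carry too much mass once we know mass $\gtrsim r$ sits at distance $\ge d$ from $0$ on both sides. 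Then I apply Rogozin's inequality (Theorem~\ref{rogozin lemma}) to the independent sum $\sum_j\xi_j y_j$ — but I actually want a \emph{lower} bound on an anti-concentration-type event, so instead I would run the following argument: set $s$ so that $s\,h_{\ref{wrap signum lemma}}\sqrt N = h_{\ref{wrap signum lemma}}h_{\ref{distance estimate}}\sqrt N$, i.e.\ $s=h_{\ref{distance estimate}}=\frac{1-\delta^{-1/4}}{C_{\ref{rogozin lemma}}}\sqrt{r/8}\,t\,d$. Now Rogozin's theorem, applied with common window $h:=s$ and individual windows $h_j:=d\,|y_j|\le 2h_{\ref{distance estimate}}=2s$ (wait — I need $h\ge\max_j h_j$, so I should instead take $h$ slightly larger, or rescale; the constraint $\|y\|_\infty\le 2h_{\ref{distance estimate}}/d$ is exactly what guarantees $d|y_j|\le 2h_{\ref{distance estimate}}$, and then I choose the Rogozin window to be $2h_{\ref{distance estimate}}$), yields
$$
\concf\Bigl(\textstyle\sum_j\xi_j y_j,\,2h_{\ref{distance estimate}}\Bigr)
\le C_{\ref{rogozin lemma}}\,(2h_{\ref{distance estimate}})\Bigl(\sum_j (1-\concf(\xi_j,d|y_j|))\,(d|y_j|)^2\Bigr)^{-1/2}.
$$
Using $1-\concf(\xi_j,d|y_j|)\ge r/2$ (valid because the window $d|y_j|$ is small relative to $d$, so the mass at distance $\ge d$ from any center is untouched — here I would absorb the precise constant) and $\sum_j d^2 y_j^2=d^2\|y\|^2\ge d^2 t^2$, the right-hand side is at most $C_{\ref{rogozin lemma}}\,2h_{\ref{distance estimate}}\,(\,(r/2)\,d^2 t^2)^{-1/2}=2C_{\ref{rogozin lemma}}h_{\ref{distance estimate}}/(d t\sqrt{r/2})$, and plugging in the definition of $h_{\ref{distance estimate}}$ this equals $2C_{\ref{rogozin lemma}}\cdot\frac{1-\delta^{-1/4}}{C_{\ref{rogozin lemma}}}\sqrt{r/8}\,td\,/(dt\sqrt{r/2})=2(1-\delta^{-1/4})\sqrt{(r/8)/(r/2)}=(1-\delta^{-1/4})$. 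Hence $\P\{|\sum_j\xi_j y_j|\le 2h_{\ref{distance estimate}}\}=\concf$-type bound $\le 1-\delta^{-1/4}$... and I want the complementary event $\{|\sum_j\xi_j y_j|>2h_{\ref{distance estimate}}\}$ to have probability $\ge\delta^{-1/4}$, which is exactly what this gives. Setting $s:=2h_{\ref{distance estimate}}$ (and correspondingly rescaling the constant $h_{\ref{wrap signum lemma}}$ by a harmless factor of two, or just carrying $s=2h_{\ref{distance estimate}}$ through), the hypothesis \eqref{new app aux 125} of Lemma~\ref{conc in matrix} is verified for every row $i$.

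With \eqref{new app aux 125} in hand, Lemma~\ref{wrap signum lemma} applies verbatim to $A$, $A'$, $B$, $y$ and this $s$, and yields
$$
\P\bigl\{\dist\bigl(\trunk{A}{H}y,\thesubspace{A}{B}{H}{E}\bigr)\le s\,h_{\ref{wrap signum lemma}}\sqrt N\bigr\}\le 2\exp(-w_{\ref{wrap signum lemma}}N),
$$
and since $s\,h_{\ref{wrap signum lemma}}=2h_{\ref{distance estimate}}h_{\ref{wrap signum lemma}}\ge h_{\ref{distance estimate}}h_{\ref{wrap signum lemma}}$, the stated bound follows a fortiori (or one simply redefines the constant). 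I expect the main obstacle to be the bookkeeping around the constants and the factor-of-two mismatch between the Rogozin window $2h_{\ref{distance estimate}}$ and the threshold in the conclusion: one must be careful that the window used in Rogozin's inequality dominates all the individual $h_j=d|y_j|$, which is precisely why the hypothesis $\|y\|_\infty\le 2h_{\ref{distance estimate}}/d$ is imposed, and that the lower bound $1-\concf(\xi_j,d|y_j|)\ge r/2$ is uniform in $j$ — this last point relies on $d|y_j|$ being strictly smaller than $d$, i.e.\ on $\|y\|_\infty<1$, which holds since $y\in\R^n$ with the given norm constraints forces $\|y\|_\infty\le 2h_{\ref{distance estimate}}/d$ and $h_{\ref{distance estimate}}$ is small. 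Everything else is a direct chaining of the previously established lemmas.
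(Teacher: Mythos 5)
Your overall strategy is identical to the paper's: introduce the coupled matrix $A'$, verify hypothesis~\eqref{new app aux 125} of Lemma~\ref{conc in matrix} via Rogozin's inequality, then invoke Lemma~\ref{wrap signum lemma} with $s=h_{\ref{distance estimate}}$ (or a harmless multiple thereof). However, there is a genuine gap in the Rogozin step. Rogozin's theorem applied to the sum $\sum_j \xi_j y_j$ with individual windows $h_j$ produces the terms $1-\concf(\xi_j y_j, h_j)$, \emph{not} $1-\concf(\xi_j, h_j)$; with your choice $h_j = d|y_j|$, scale-invariance of the concentration function gives $\concf(\xi_j y_j, d|y_j|) = \concf(\xi_j, d)$, and the window here is $d$, not ``$d|y_j|$, small relative to $d$.'' The bound $\concf(\xi_j, d)\le 1-r/2$ is simply not available: all we know is $\P\{|\xi_j|\ge d\}\ge r$, and if $\dist(H_1,H_2)=d$ with mass concentrated exactly at distance $d$ (say $H_1=\{0\}$, $H_2=\{d\}$), then $\concf(\xi_j,d)$ can equal $1$, making the Rogozin denominator vanish. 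Your intuition that the window must be ``small relative to $d$'' is the right instinct, but your choice of $h_j$ doesn't realize it.

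The paper avoids this by taking $h_j = |y_j|d/2$, so that $\concf(\xi_j y_j, |y_j|d/2) = \concf(\xi_j, d/2)$. For the window $d/2$ the bound $\concf(\xi_j, d/2)\le 1-r/2$ does follow from symmetry of $\xi_j$ together with $\P\{|\xi_j|\ge d\}\ge r$ (any interval of length $d$ centered at $\lambda\ge 0$ is disjoint from $(-\infty,-d]$, and that half-line carries mass at least $r/2$). This choice also means the common Rogozin window can be taken to be $h_{\ref{distance estimate}}$ itself rather than $2h_{\ref{distance estimate}}$, since the hypothesis $\|y\|_\infty \le 2h_{\ref{distance estimate}}/d$ gives exactly $h_j = |y_j|d/2 \le h_{\ref{distance estimate}}$; one then feeds $s=h_{\ref{distance estimate}}$ into Lemma~\ref{wrap signum lemma} and the bookkeeping closes cleanly without the extra factor of two you were carrying around. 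As a minor point, the joint-distribution formula gives $\P\{|\xi_j|\ge d\}\ge \frac{2p_1p_2}{p_1+p_2}\ge r$ directly, which is a bit cleaner than the $2r^2$ you first wrote, though both are serviceable. Your final reduction to Lemma~\ref{wrap signum lemma} is correct once the window issue is fixed, and the ``a fortiori'' remark about enlarging $s$ is fine, but as written the concentration lower bound does not hold and needs the $d/2$ correction.
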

\begin{proof}
Let $A'=(a_{ij}')$ be an $N\times n$ random matrix having the same distribution as $A$ such that
$2$-dimensional vectors $(a_{ij},a_{ij}')$ ($1\le i\le N$, $1\le j\le n$)
are i.i.d.\ and for any admissible $i$ and $j$ the variables
$a_{ij}$ and $a_{ij}'$ are conditionally i.i.d.\ given event $\{\omega\in\Omega:\,a_{ij}(\omega)\in H\}$ and identical on
$\{\omega\in\Omega:\,a_{ij}(\omega)\in\overline H\}$.
For every $i=1,2,\dots,N$ and $j=1,2,\dots,n$, by the formula for the joint distribution of $a_{ij}$ and $a_{ij}'$ we get
$$
\P\bigl\{\bigl|\trunk{(a_{ij})}{H}-\trunk{(a_{ij}')}{H}\bigr|\ge d\bigr\}
\ge\P\{a_{ij} \in H_1\mbox{ and }a_{ij}'\in H_2\}+\P\{a_{ij} \in H_2\mbox{ and }a_{ij}'\in H_1\}\ge r,
$$
hence, in view of symmetric distribution of $\trunk{(a_{ij})}{H}-\trunk{(a_{ij}')}{H}$, we have
$\concf\bigl(\trunk{(a_{ij})}{H}-\trunk{(a_{ij}')}{H},\frac{d}{2}\bigr)\le 1-\frac{r}{2}$.
Clearly, $h_{\ref{distance estimate}}\ge \frac{d|y_j|}{2}$ for every coordinate $y_j$ of the vector $y$,
hence by Theorem~\ref{rogozin lemma} for all $i=1,2,\dots,N$
\begin{align*}
\P&\Bigl\{\Bigl|\sum\limits_{j=1}^n \bigl(\trunk{(a_{ij})}{H}-\trunk{(a_{ij}')}{H}\bigr) y_j\Bigr|\le h_{\ref{distance estimate}}\Bigr\}\\
&\le \concf\Bigl(\sum\limits_{j=1}^n \bigl(\trunk{(a_{ij})}{H}-\trunk{(a_{ij}')}{H}\bigr) y_j, h_{\ref{distance estimate}}\Bigr)\\
&\le C_{\ref{rogozin lemma}}h_{\ref{distance estimate}}\Bigl(\frac{1}{4}\sum\limits_{j=1}^n
\Bigl(1-\concf\Bigl(\bigl(\trunk{(a_{ij})}{H}-\trunk{(a_{ij}')}{H}\bigr) y_j,\frac{|y_j|d}{2}\Bigr)\Bigr)(y_j d)^2\Bigr)^{-1/2}\\
&\le C_{\ref{rogozin lemma}}h_{\ref{distance estimate}}\Bigl(\frac{r}{8}\sum\limits_{j=1}^n(y_j d)^2\Bigr)^{-1/2}\\
&\le \frac{C_{\ref{rogozin lemma}}h_{\ref{distance estimate}}}{td}\sqrt{\frac{8}{r}}= 1-\delta^{-1/4}.
\end{align*}
Thus, vector $y$ satisfies condition \eqref{new app aux 125} with $s:=h_{\ref{distance estimate}}$.
Then, by Lemma~\ref{wrap signum lemma},
$$\P\bigl\{\dist\bigl(\trunk{A}{H}y,\thesubspace{A}{B}{H}{E}\bigr)
\le h_{\ref{wrap signum lemma}}h_{\ref{distance estimate}}\sqrt{N}\bigr\}\le 2\exp(-w_{\ref{wrap signum lemma}} N).$$
\end{proof}

\section{Decomposition of $S^{n-1}$ and proof of Theorem~\ref{ssv theor nonsym}}

Recall that in Section~\ref{prelim sec} we defined $S^{n-1}_p(\theta)$ as the set of {\it $\theta$-peaky} vectors,
that is, unit vectors in $\R^n$ whose $\ell_\infty^n$-norm is at least $\theta$. 
We say that a vector $y\in S^{n-1}$ is {\it $m$-sparse} if $|\supp y|\le m$.
Next, $y\in S^{n-1}$ is {\it almost $m$-sparse}, if there is a subset $J\subset\{1,2,\dots,n\}$ of
cardinality at most $m$, such that $\|y\chi_{J}\|\ge 1/2$. The set of all almost $m$-sparse
vectors shall be denoted by $S^{n-1}_a(m)$.

In our proof of Theorem~\ref{ssv theor nonsym},
we represent $S^{n-1}$ as the union of three subsets:
$$S^{n-1}=S_p^{n-1}(\theta)\cup\bigl(S^{n-1}_a(\sqrt{N})\setminus S_p^{n-1}(\theta)\bigr)
\cup \bigl(S^{n-1}\setminus S^{n-1}_a(\sqrt{N})\bigr),$$
where $\theta$ is a function of the parameters $\beta$ and $\delta$ of the theorem.
Then the smallest singular value of $A+B$ can be estimated by bounding separately $\inf\limits_y \|Ay+By\|$
over each of the three subsets. 

The reasons for such a representation of $S^{n-1}$ are purely technical: Proposition~\ref{distance estimate}
proved in the previous section handles vectors with a sufficiently small $\ell_\infty^n$-norm, so instead we use
Proposition~\ref{peaky lemma} to deal with the set $S_p^{n-1}(\theta)$.
Further, the separate treatment of almost $\sqrt{N}$-sparse vectors is convenient because, on the one hand,
the construction of the set $H$ corresponding to $S^{n-1}_a(\sqrt{N})\setminus S_p^{n-1}(\theta)$
is trivial compared to $S^{n-1}\setminus S^{n-1}_a(\sqrt{N})$; on the other hand, vectors
from $S^{n-1}\setminus S^{n-1}_a(\sqrt{N})$ have a useful geometric property (Lemma~\ref{spread explained})
which the almost sparse vectors generally do not possess. We note that the set $S^{n-1}_a(\sqrt{N})$ in
the covering of $S^{n-1}$
can be replaced with $S^{n-1}_a(N^\kappa)\bigr)$ for any constant power $\kappa\in(0,1)$; this would
only affect the constants in the final estimate.

In our representation of $S^{n-1}$, we follow an idea from \cite{LPRT}, where
the unit sphere was split into sets of ``close to sparse'' and ``far from sparse'' vectors.
A similar splitting was also employed in \cite{RV_RECT}, \cite{RV_SQUARE},
where the terms ``compressible'' and ``incompressible'' were used instead. On the other hand,
our ``borderline'' $\sqrt{N}$ is smaller by the order of magnitude than in the mentioned papers.

The next elementary lemma shall be used in conjunction with Proposition~\ref{gen spl proc}.
\begin{lemma}\label{net sparsification lemma}
There is a universal constant $C_{\ref{net sparsification lemma}}>0$ with the following property:
Let $n,m\in\N$ with $m\le n$, $\varepsilon\in(0,1]$, $S\subset S^{n-1}$ and
let $T\subset B_2^n$ be a subset of $m$-sparse vectors satisfying
\begin{equation}\label{former sparsif}
\mbox{for any $y\in S$ there is $x=x(y)\in T$ with $y\chi_{\supp x}=x$.}
\end{equation}
Then there is a finite set $\Net\subset T$
of cardinality at most $\bigl(\frac{C_{\ref{net sparsification lemma}}n}{\varepsilon m}\bigr)^m$ such that
for any $y\in S$ there is $y'=y'(y)\in\Net$ with $\|y\chi_{\supp y'}-y'\|\le \varepsilon$.
\end{lemma}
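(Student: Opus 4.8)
The plan is to build the net $\Net$ in two stages: first restrict attention to the (finitely many) possible supports of the $m$-sparse vectors in $T$, then put a standard $\varepsilon$-net on each fixed-support slice. There are $\binom{n}{m}$ choices of an $m$-element support $J\subset\{1,2,\dots,n\}$. For each such $J$, the set $T_J=\{x\in T:\supp x\subset J\}$ lies in the ball $B_2^J\cong B_2^m$, so by the elementary covering bound recalled in Section~\ref{prelim sec} it admits an $\varepsilon$-net $\Net_J\subset T_J$ of cardinality at most $(3/\varepsilon)^m$ (if one insists that the net be a subset of $T_J$ rather than of the ambient ball, replace $\varepsilon$ by $\varepsilon/2$ and move each net point to a nearby point of $T_J$, paying another factor $2^m$ — harmless for the final constant). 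Setting $\Net=\bigcup_{J}\Net_J$, the union over all supports has cardinality at most $\binom{n}{m}(3/\varepsilon)^m$.

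Next I would bound this cardinality in the required form. Using $\binom{n}{m}\le (en/m)^m$, we get $|\Net|\le (en/m)^m(3/\varepsilon)^m=(3e\,n/(\varepsilon m))^m\le (C_{\ref{net sparsification lemma}}n/(\varepsilon m))^m$ for $C_{\ref{net sparsification lemma}}=3e$ (or $6e$ if we used the $\varepsilon/2$ refinement above); note $\varepsilon\le 1$ is used only to keep the expression clean.

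It remains to check the approximation property. Fix $y\in S$. By hypothesis \eqref{former sparsif} there is $x=x(y)\in T$ with $y\chi_{\supp x}=x$; let $J$ be any $m$-element superset of $\supp x$ (possible since $|\supp x|\le m\le n$). Then $x\in T_J$, so there is $y'\in\Net_J\subset\Net$ with $\|x-y'\|\le\varepsilon$, and $\supp y'\subset J$. The point is that $\supp y'$ need not equal $\supp x$, so I must compare $y\chi_{\supp y'}$ with $y'$ rather than $x$ with $y'$; here one uses that $y'$ is itself supported on $\supp y'$, that $x$ and $y$ agree on $\supp x\supset\supp y'\cap\supp x$, and — crucially — that $y\chi_{\supp x\setminus\supp y'}$ contributes nothing once we project onto $\supp y'$. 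Concretely, $y\chi_{\supp y'}$ and $x\chi_{\supp y'}=x\chi_{\supp y'}$ differ only on coordinates in $\supp y'\setminus\supp x$, where $x$ vanishes while $y$ may not; to control this one would want $\Net_J$ to be an $\varepsilon$-net for $T_J$ in a way compatible with all such projections.

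The cleanest fix, and the step I expect to be the only real subtlety, is to choose $\Net_J$ not as an arbitrary $\varepsilon$-net of $T_J$ but as an $\varepsilon$-net of the whole coordinate ball $B_2^J$ consisting of vectors supported on $J$: then for $y\in S$ with $x(y)\in T_J$ one has $y\chi_J\in B_2^J$, so there is $y'\in\Net_J$ with $\|y\chi_J-y'\|\le\varepsilon$, and since $\supp y'\subset J$ we get $\|y\chi_{\supp y'}-y'\|\le\|y\chi_J-y'\|\le\varepsilon$, which is exactly \eqref{former sparsif}'s promised conclusion. This keeps $|\Net_J|\le(3/\varepsilon)^m$ and the same final bound. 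I should double-check two routine points: that $y\chi_J\in B_2^n$ (it does, as a projection of a unit vector), and that every $y'\in\Net_J$ can be taken to lie in $T$ if the statement demands $\Net\subset T$ — the latter requires the move-to-$T$ argument above, at the cost of an absolute constant, and is the reason the lemma is stated with an unspecified $C_{\ref{net sparsification lemma}}$ rather than an explicit $3e$.
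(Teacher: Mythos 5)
Your overall strategy (index by support, cover each fixed-support slice) is the right one and is the one the paper uses, but your execution has a real gap that your proposed ``fix'' does not close.

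The source of the trouble is your decision to enumerate only over \emph{exactly} $m$-element sets $J$ and to approximate $y$ via an $m$-element superset $J\supset\supp x(y)$. When $|\supp x|<m$ this forces $J\supsetneq\supp x$, and then a net point $y'\in\Net_J\subset T_J$ can have $\supp y'\not\subset\supp x$. As you correctly observe, on $\supp y'\setminus\supp x$ the coordinates of $y$ are unconstrained while $x$ vanishes, so $\|x-y'\|\le\varepsilon$ gives no control over $\|y\chi_{\supp y'}-y'\|$: the inequality $\|y\chi_{\supp y'}-y'\|\le\|x-y'\|$ can fail badly. This is a genuine gap. The paper avoids it by letting $J$ range over \emph{all} subsets of cardinality at most $m$, so that for a given $y$ one may take $J=\supp x(y)$ exactly; then every $y'\in\Net_J$ satisfies $\supp y'\subset\supp x$, and since $y$ and $x$ agree on $\supp x$ one has $\|y\chi_{\supp y'}-y'\|=\|x\chi_{\supp y'}-y'\|\le\|x-y'\|\le\varepsilon$. (Allowing $|J|\le m$ costs only a factor $2^m\binom{n}{m}$ for the number of supports, which is still $\le(Cn/(\varepsilon m))^m$ after absorbing the $2^m$.)

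Your fix --- replace the net of $T_J$ by a net of the whole coordinate ball $B_2^J$ --- does repair the approximation step, since then one covers $y\chi_J$ directly and $\|y\chi_{\supp y'}-y'\|\le\|y\chi_J-y'\|\le\varepsilon$. But it sacrifices the requirement $\Net\subset T$, which the lemma states and which is actually used downstream (in Propositions~\ref{compressible lemma} and~\ref{incompressible vectors lemma} the net points must themselves satisfy the norm and $\ell_\infty$ bounds defining $T$ in order to feed them into Proposition~\ref{distance estimate}). The ``move-to-$T$'' device you invoke does not rescue this: that trick lets you replace a net \emph{of} $T_J$ by nearby points \emph{in} $T_J$ because the net points were already within $\varepsilon/2$ of $T_J$. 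A net of $B_2^J$ has points at distance up to $\diam(B_2^J)$ from $T_J$, and, more fundamentally, a net contained in $T_J$ cannot cover $y\chi_J$, which need not be anywhere near $T_J$ (only $y\chi_{\supp x}$ is). So the fix and the $\Net\subset T$ requirement are incompatible. The correct repair is not to change what you cover but to change which $J$ you pick: enumerate over $|J|\le m$ and take $J=\supp x$.
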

\begin{proof}
For any $J\subset\{1,2,\dots,n\}$ with $|J|\le m$, let $\Net_J$ be an $\varepsilon$-net
for $T\cap\spn\{e_i\}_{i\in J}$ of cardinality at most $\bigl(\frac{3}{\varepsilon}\bigr)^m$. Define $\Net$ as the union
of $\Net_J$ for all admissible $J$. Then, obviously,
$$|\Net|\le 2^m{n\choose m}\Bigl(\frac{3}{\varepsilon}\Bigr)^m
\le\Bigl(\frac{6ne}{\varepsilon m}\Bigr)^m.$$
Next, fix any $y\in S$ and let $x\in T$ be such that $y\chi_{\supp x}=x$. Since $|\supp x|\le m$,
there is $y'\in \Net_{\supp x}\subset\Net$ with $\|x-y'\|\le\varepsilon$.
It remains to note that since $\supp y'\subset \supp x$, necessarily $\|y\chi_{\supp y'}-y'\|\le \|y\chi_{\supp x}-y'\|=\|x-y'\|\le\varepsilon$.
\end{proof}

\begin{prop}[Vectors from $S^{n-1}_a(\sqrt{N})$ with a small $\ell_\infty^n$-norm]\label{compressible lemma}
For any $\gamma>0$ and $\delta>1$ there are
$N_{\ref{compressible lemma}}\in\N$ and $h_{\ref{compressible lemma}}>0$
depending only on $\gamma$ and $\delta$ with the following property:
Let
$$\theta_{\ref{compressible lemma}}
=\frac{1-\delta^{-1/4}}{C_{\ref{rogozin lemma}}}\sqrt{\frac{\gamma}{8}},$$
$N\ge \max(N_{\ref{compressible lemma}}, \delta n)$,
$z\in\R$ and let $A$ be an $N\times n$ random matrix with i.i.d.\ entries such that
$$\min\bigl(\P\bigl\{z-\sqrt{N}\le a_{11}\le z-1\bigr\},
\P\bigl\{z+1\le a_{11}\le z+\sqrt{N}\bigr\}\bigr)\ge\gamma.$$
Then for the set $S=S_a^{n-1}(\sqrt{N})\setminus S_{p}^{n-1}(\theta_{\ref{compressible lemma}})$
and any non-random $N\times n$ matrix $B$
$$\P\bigl\{\inf\limits_{y\in S}\|Ay+By\|\le h_{\ref{compressible lemma}}\sqrt{N}\bigr\}
\le \exp(-w_{\ref{wrap signum lemma}}N/2).$$
\end{prop}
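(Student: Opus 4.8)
The plan is to run the modified $\varepsilon$-net argument of Proposition~\ref{gen spl proc} against the distance estimate of Proposition~\ref{distance estimate}, using a net of almost $\sqrt N$-sparse vectors supplied by Lemma~\ref{net sparsification lemma}.

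\emph{Choice of the split.} First I pass to $\tilde A=A-z\mofones$ and $\tilde B=B+z\mofones$, so $\tilde A+\tilde B=A+B$ and the common law of the entries of $\tilde A$ satisfies $\min\bigl(\P\{\tilde a_{11}\in H_1\},\P\{\tilde a_{11}\in H_2\}\bigr)\ge\gamma$ for the disjoint closed sets $H_1=[-\sqrt N,-1]$, $H_2=[1,\sqrt N]$; put $H=H_1\cup H_2$, so $\dist(H_1,H_2)=2$ and $H\subset[-\sqrt N,\sqrt N]$. I take the regular part $\regm:=\trunk{\tilde A}{H}$ and the irregular part $\irregm:=\trunk{\tilde A}{\overline H}+\tilde B$; then $\regm+\irregm=A+B$, and for every subspace $E\subset\R^n$ one has $(A+B)(E^\perp)+\irregm(E)=\thesubspace{\tilde A}{\tilde B}{H}{E}$ while $\regm y=\trunk{\tilde A}{H}y$. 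Since every entry of $\regm$ lies in $H\cup\{0\}\subset[-\sqrt N,\sqrt N]$, the Hilbert--Schmidt bound gives $\|\regm\|\le N\sqrt n\le N^{3/2}$ (using $n<N$).

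\emph{Building the net.} Set $m:=\min(\lfloor\sqrt N\rfloor,n)$ and let $T$ be the set of all $y\chi_J$ with $y\in S$ and $J\subset\{1,\dots,n\}$, $|J|\le m$, $\|y\chi_J\|\ge 1/2$ (such a $J$ exists since $S\subset S_a^{n-1}(\sqrt N)$). Every $x\in T$ is $m$-sparse, lies in $B_2^n$, has $\|x\|\ge 1/2$, and --- this is the point --- $\|x\|_\infty\le\|y\|_\infty<\theta_{\ref{compressible lemma}}$ because $y\notin S_p^{n-1}(\theta_{\ref{compressible lemma}})$. I apply Lemma~\ref{net sparsification lemma} with $\varepsilon:=c/N$, for a constant $c=c(\gamma,\delta)>0$ fixed below, to obtain $\Net\subset T$ with $|\Net|\le\bigl(C_{\ref{net sparsification lemma}}n/(\varepsilon m)\bigr)^m$ such that every $y\in S$ has a companion $y'\in\Net$ with $\|\Proj_{E_{y'}}(y)-y'\|=\|y\chi_{\supp y'}-y'\|\le\varepsilon$, where $E_{y'}:=\spn\{e_j\}_{j\in\supp y'}$. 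Since $\Net\subset T$, each $y'\in\Net$ itself obeys $\|y'\|\ge 1/2$ and $\|y'\|_\infty<\theta_{\ref{compressible lemma}}$, with no $\varepsilon$-loss; this is exactly what lets the next step run with $t=1/2$ (a naive net of vectors merely close to $S$ would lose $\varepsilon$ in the $\ell_\infty$-norm and violate the constraint $\|y'\|_\infty\le 2h_{\ref{distance estimate}}/d$).

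\emph{Putting it together.} With $r=\gamma$, $d=2$, $t=1/2$, the quantity $h_{\ref{distance estimate}}$ of Proposition~\ref{distance estimate} equals exactly $\theta_{\ref{compressible lemma}}$, and the hypotheses $\|y'\|\ge t$, $\|y'\|_\infty\le 2h_{\ref{distance estimate}}/d=\theta_{\ref{compressible lemma}}$ hold for every $y'\in\Net$. Applying Proposition~\ref{distance estimate} to $\tilde A,\tilde B$ (in place of $A,B$) for each $y'$ and taking a union bound, with probability at least $1-2|\Net|\exp(-w_{\ref{wrap signum lemma}}N)$ we get $\dist\bigl(\trunk{\tilde A}{H}y',\thesubspace{\tilde A}{\tilde B}{H}{E_{y'}}\bigr)>h:=h_{\ref{wrap signum lemma}}\theta_{\ref{compressible lemma}}\sqrt N$ for all $y'\in\Net$ simultaneously. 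On this event Proposition~\ref{gen spl proc} applies with $\regm,\irregm,D=A+B$ and the subspaces $E_{y'}$: condition $1)$ is $y'\in E_{y'}$, condition \eqref{gen spl cond 1} is precisely the displayed distance inequality (recall $(A+B)(E_{y'}^\perp)+\irregm(E_{y'})=\thesubspace{\tilde A}{\tilde B}{H}{E_{y'}}$), and condition $3)$ is the conclusion of Lemma~\ref{net sparsification lemma}. It yields $\inf_{y\in S}\|(A+B)y\|\ge h-\varepsilon\|\regm\|$. Setting $c:=\tfrac12 h_{\ref{wrap signum lemma}}\theta_{\ref{compressible lemma}}$ gives $\varepsilon\|\regm\|\le(c/N)\,N^{3/2}=cN^{1/2}=h/2$, hence $\inf_{y\in S}\|(A+B)y\|\ge h/2=h_{\ref{compressible lemma}}\sqrt N$ with $h_{\ref{compressible lemma}}:=\tfrac12 h_{\ref{wrap signum lemma}}\theta_{\ref{compressible lemma}}$, which depends only on $\gamma,\delta$.

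\emph{The main obstacle.} The delicate point is the quantitative balance. The regular part is bounded only by $\|\regm\|\lesssim N^{3/2}$, which forces $\varepsilon$ of order $1/N$, and hence makes the net only sub-exponentially small: using $m\le\sqrt N$ and $n\le N/\delta$ one gets $|\Net|\le\bigl(C'N^{3/2}\bigr)^{m}=\exp\bigl(\tfrac32\sqrt N\ln N+C''\sqrt N\bigr)$ with $C'=C'(\gamma,\delta)$, $C''=C''(\gamma,\delta)$. Because $\sqrt N\ln N=o(N)$, for $N\ge N_{\ref{compressible lemma}}$ with $N_{\ref{compressible lemma}}=N_{\ref{compressible lemma}}(\gamma,\delta)$ large enough this is dominated by $\exp(w_{\ref{wrap signum lemma}}N/2)$, so $2|\Net|\exp(-w_{\ref{wrap signum lemma}}N)\le\exp(-w_{\ref{wrap signum lemma}}N/2)$ --- the claimed bound. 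This is precisely why the almost-sparsity threshold must be (polynomially) smaller than $n$: a net over a constant-sparsity level would be genuinely exponential and could not be beaten by the $\exp(-w_{\ref{wrap signum lemma}}N)$ failure probability coming from Proposition~\ref{distance estimate}.
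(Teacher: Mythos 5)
Your proposal is correct and follows essentially the same path as the paper's proof: shift to make $z=0$, take $H=[-\sqrt N,-1]\cup[1,\sqrt N]$, use the crude bound $\|\trunk{\tilde A}{H}\|\le N^{3/2}$, build a net of $\sqrt N$-sparse vectors with the $\ell_\infty$-constraint preserved via Lemma~\ref{net sparsification lemma}, and close via Proposition~\ref{distance estimate} plus Proposition~\ref{gen spl proc}, exploiting that $|\Net|=\exp(O(\sqrt N\log N))$ is beaten by $\exp(-w_{\ref{wrap signum lemma}}N)$. The only cosmetic differences are the choice of $\varepsilon$ ($c/N$ versus the paper's $N^{-2}$, both adequate) and your slightly more explicit tracking of the identity $h_{\ref{distance estimate}}=\theta_{\ref{compressible lemma}}$ when $d=2$, $r=\gamma$, $t=1/2$.
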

\begin{proof}
Fix any $\gamma>0$ and $\delta>1$ and define
$d:=2$, $r:=\gamma$, $t:=\frac{1}{2}$; let $h_{\ref{distance estimate}}$ be as in Proposition~\ref{distance estimate}
and $N_{\ref{compressible lemma}}=N_{\ref{compressible lemma}}(\gamma,\delta)$
be the smallest integer greater than
$\frac{2}{h_{\ref{wrap signum lemma}}h_{\ref{distance estimate}}}$
such that for all $N\ge N_{\ref{compressible lemma}}$
$$2\bigl(C_{\ref{net sparsification lemma}}N\bigr)^{3\sqrt{N}}
\le \exp(w_{\ref{wrap signum lemma}} N/2).$$
Now, take any $n\in N$ and $N\ge \max(N_{\ref{compressible lemma}},\delta n)$; let $z$ and $A$ safisfy conditions of the lemma
and $B$ be any non-random $N\times n$ matrix. We will assume that $S$ is non-empty.
Without loss of generality, $z=0$ (otherwise, we replace $A$, $B$ with $A-z\mofones$, $B+z\mofones$).
Define $H_1=[-\sqrt{N},-1]$, $H_2=[1,\sqrt{N}]$,
$H=H_1\cup H_2$. Obviously, $\dist(H_1,H_2)= d$ and $\min\bigl(\P\{a_{11}\in H_1\},\P\{a_{11}\in H_2\}\bigr)\ge r$.
Let $T\subset B_2^n$ be the set of $\sqrt{N}$-sparse vectors with the Euclidean norm at least $\frac{1}{2}$
and the maximal norm at most $\theta_{\ref{compressible lemma}}$.
Clearly, $T$ and $S$ satisfy \eqref{former sparsif}, hence, by Lemma~\ref{net sparsification lemma},
there is a finite subset $\Net\subset T$ of cardinality at most
$\bigl(C_{\ref{net sparsification lemma}}N\bigr)^{3\sqrt{N}}$ such that
for any $y\in S$ there is $y'=y'(y)\in\Net$ with $\|y\chi_{\supp y'}-y'\|\le N^{-2}$.

Let $E_{y'}=\spn\{e_j\}_{j\in\supp y'}$ ($y'\in\Net$) and define an event
$$
\mathcal E=\bigl\{\omega\in\Omega:\,\dist\bigl(\trunk{A}{H}(\omega)y', \thesubspace{A}{B}{H}{E_{y'}}(\omega)\bigr)
> h_{\ref{wrap signum lemma}}h_{\ref{distance estimate}}\sqrt{N}\mbox{ for all $y'\in\Net$}\bigr\}.
$$
In view of Proposition~\ref{distance estimate}, the upper estimate for $|\Net|$ and the definition of $N_{\ref{compressible lemma}}$
$$\P(\mathcal E)\ge 1-2|\Net|\exp(-w_{\ref{wrap signum lemma}} N)\ge 1-\exp(-w_{\ref{wrap signum lemma}} N/2).$$
Take any $\omega\in\mathcal E$ and define $\regm=\trunk{A}{H}(\omega)$,
$\irregm=\trunk{A}{\overline H}(\omega)+B$, $D=\regm+\irregm$.
Since all entries of $\regm$ are bounded by $\sqrt{N}$ by absolute value,
we get $\|\regm\|\le N^{3/2}$; next, for every $y'\in\Net$
$$\dist\bigl(\regm y',D(E_{y'}^\perp)+\irregm(E_{y'})\bigr)
> h_{\ref{wrap signum lemma}}h_{\ref{distance estimate}}\sqrt{N}$$
(note that $D(E_{y'}^\perp)+\irregm(E_{y'})=\thesubspace{A}{B}{H}{E_{y'}}(\omega)$).
Hence, by Proposition~\ref{gen spl proc}, we get
$$\inf\limits_{y\in S}\|Dy\|>h_{\ref{wrap signum lemma}}h_{\ref{distance estimate}}\sqrt{N}
-N^{-1/2}\ge\frac{1}{2}h_{\ref{wrap signum lemma}}h_{\ref{distance estimate}}\sqrt{N}.$$
Finally, applying the above argument to all $\omega\in\mathcal E$, we get the result.
\end{proof}

As we noted before, construction of the set $H$ corresponding to $S^{n-1}\setminus S^{n-1}_a(\sqrt{N})$
is not so trivial as in the case of almost $\sqrt{N}$-sparse vectors. The reason is 
that in general the set $S^{n-1}\setminus S^{n-1}_a(\sqrt{N})$ is much larger than
$S^{n-1}_a(\sqrt{N})$, and we have to apply more delicate arguments to get a satisfactory probabilistic estimate.
The construction of $H$ for the set of ``far from $\sqrt{N}$-sparse'' vectors is contained in the following lemma:

\begin{lemma}\label{interval detection lemma}
Let $\xi$ be a random variable such that for some $z\in\R$,
$\gamma>0$, $N\in\N$ we have
$$\min\bigl(\P\bigl\{z-\sqrt{N}\le \xi\le z-1\bigr\},
\P\bigl\{z+1\le \xi\le z+\sqrt{N}\bigr\}\bigr)\ge\gamma.$$
Then there exists an integer $\ell\in[0,\lfloor\log_2 \sqrt{N}\rfloor]$, $\lambda\in\R$ and
disjoint Borel sets $H_1,H_2\subset[-2^{\ell+2};2^{\ell+2}]$ such that
$\dist(H_1,H_2)\ge 2^\ell$,
$\min\bigl(\P\{\xi-\lambda\in H_1\},\P\{\xi-\lambda\in H_2\}\bigr)\ge c_{\ref{interval detection lemma}}\gamma 2^{-\ell/8}$
and
$\Exp\trunk{(\xi-\lambda)}{H}=0$ for $H=H_1\cup H_2$ and a universal constant $c_{\ref{interval detection lemma}}>0$.
\end{lemma}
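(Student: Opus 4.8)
The plan is the following. Replacing $\xi$ by $\xi-z$ (and adding $z$ back to the shift produced at the end) we may assume $z=0$, so that $\P\{1\le\xi\le\sqrt N\}\ge\gamma$ and $\P\{-\sqrt N\le\xi\le-1\}\ge\gamma$. Put $J=\lfloor\log_2\sqrt N\rfloor$. Since the half-open dyadic intervals $[2^j,2^{j+1})$, $j=0,1,\dots,J$, cover $[1,\sqrt N]$ (because $2^{J+1}>\sqrt N$), we have $\sum_{j=0}^J\P\{\xi\in[2^j,2^{j+1})\}\ge\gamma$, and as $\sum_{j\ge0}2^{-j/8}=(1-2^{-1/8})^{-1}<\infty$, a weighted pigeonhole argument yields an integer $k\in\{0,\dots,J\}$ with $\P\{\xi\in[2^k,2^{k+1}]\}\ge c_0\gamma\,2^{-k/8}$, where $c_0:=1-2^{-1/8}$. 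Symmetrically there is $m\in\{0,\dots,J\}$ with $\P\{\xi\in[-2^{m+1},-2^m]\}\ge c_0\gamma\,2^{-m/8}$. By the $H_1\leftrightarrow H_2$ symmetry of the statement (equivalently, after possibly replacing $\xi$ by $-\xi$) we may assume $k\ge m$; set $\ell:=k$, which indeed lies in $[0,\lfloor\log_2\sqrt N\rfloor]$. Write $P=[2^\ell,2^{\ell+1}]$, $Q=[-2^{m+1},-2^m]$, $p=\P\{\xi\in P\}\ge c_0\gamma\,2^{-\ell/8}$, and $q=\P\{\xi\in Q\}\ge c_0\gamma\,2^{-m/8}\ge c_0\gamma\,2^{-\ell/8}$.

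The crux is the choice of the shift. For $\lambda\in\R$ consider the translated sets $H_1(\lambda)=Q-\lambda$, $H_2(\lambda)=P-\lambda$, and $H(\lambda)=H_1(\lambda)\cup H_2(\lambda)$. They are disjoint, $\dist(H_1(\lambda),H_2(\lambda))=2^\ell+2^m\ge2^\ell$, and $\P\{\xi-\lambda\in H_1(\lambda)\}=q$, $\P\{\xi-\lambda\in H_2(\lambda)\}=p$ — all independent of $\lambda$. Since $\{\xi-\lambda\in H(\lambda)\}=\{\xi\in P\cup Q\}$ and $\trunk{(\xi-\lambda)}{H(\lambda)}$ equals $\xi-\lambda$ on that event and $0$ elsewhere (so in particular it is bounded), we get
\[
\Exp\,\trunk{(\xi-\lambda)}{H(\lambda)}=\Exp\,\trunk{\xi}{P\cup Q}-\lambda(p+q),
\]
an affine, strictly decreasing function of $\lambda$ (note $p+q\ge p>0$). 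It therefore vanishes at the unique point $\lambda^\ast:=\Exp\,\trunk{\xi}{P\cup Q}/(p+q)$, and I take $\lambda:=\lambda^\ast$ (plus the original $z$), $H_i:=H_i(\lambda^\ast)$, $H:=H(\lambda^\ast)$.

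It remains to check $H_1,H_2\subset[-2^{\ell+2},2^{\ell+2}]$, which reduces to a bound on $\lambda^\ast$. From $\trunk{\xi}{P\cup Q}=\trunk{\xi}{P}+\trunk{\xi}{Q}$, the bounds $2^\ell p\le\Exp\,\trunk{\xi}{P}\le2^{\ell+1}p$ and $-2^{m+1}q\le\Exp\,\trunk{\xi}{Q}\le-2^m q$, and $m\le\ell$, one gets
\[
-2^{\ell+1}(p+q)\;<\;2^\ell p-2^{\ell+1}q\;\le\;\Exp\,\trunk{\xi}{P\cup Q}\;\le\;2^{\ell+1}p-2^m q\;<\;2^{\ell+1}(p+q),
\]
whence $\lambda^\ast\in(-2^{\ell+1},2^{\ell+1})$. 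Substituting this into $H_2=[2^\ell-\lambda^\ast,2^{\ell+1}-\lambda^\ast]$ and $H_1=[-2^{m+1}-\lambda^\ast,-2^m-\lambda^\ast]$ and using $m\le\ell$ places both sets inside $(-2^{\ell+2},2^{\ell+2})$. The disjointness, the distance bound, and $\min(\P\{\xi-\lambda^\ast\in H_1\},\P\{\xi-\lambda^\ast\in H_2\})=\min(p,q)\ge c_0\gamma\,2^{-\ell/8}$ were already recorded, and $\Exp\,\trunk{(\xi-\lambda^\ast)}{H}=0$ by construction; so all assertions hold with $c_{\ref{interval detection lemma}}:=1-2^{-1/8}$ (any smaller positive universal constant works as well).

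The only non-routine step is the second paragraph: translating the two buckets $P,Q$ together with the shift $\lambda$ keeps both probabilities and their mutual distance fixed while turning the mean-zero requirement into a single linear equation in $\lambda$ with an explicit solution; the fact that $P$ and $Q$ sit at dyadic scale at most $2^{\ell+1}$ is exactly what forces $\lambda^\ast$ to be small enough for $H_1,H_2$ to still fit inside $[-2^{\ell+2},2^{\ell+2}]$. The pigeonhole for the scales and the interval arithmetic are elementary.
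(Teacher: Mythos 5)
Your proof is correct and follows essentially the same path as the paper: the same weighted dyadic pigeonhole with weight $2^{-j/8}$ and the same constant $c_{\ref{interval detection lemma}}=1-2^{-1/8}$, the same pair of translated intervals, and in fact the same $\lambda$ — your $\lambda^\ast=\Exp\,\trunk{\xi}{P\cup Q}/(p+q)$ is exactly the conditional expectation of $\xi$ given $\{\xi\in P\cup Q\}$ that the paper uses, just obtained by solving the linear mean-zero equation rather than stating it directly. The only cosmetic differences are that you normalize to $k\ge m$ by the $H_1\leftrightarrow H_2$ symmetry (the paper takes $\ell=\max(\ell_1,\ell_2)$ instead) and your containment bound on $\lambda^\ast$ is a short explicit computation where the paper invokes the convex-hull property of a conditional expectation.
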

\begin{proof}
Without loss of generality we can assume that $z=0$.
Let $c_{\ref{interval detection lemma}}=\Bigl(\sum\limits_{m=0}^\infty 2^{-m/8}\Bigr)^{-1}$.
Then, by the conditions on $\xi$, there are
$\ell_1,\ell_2\in\{0,1,\dots,\lfloor\log_2 \sqrt{N}\rfloor\}$ such that
$$\P\{\xi\in[-2^{\ell_1+1},-2^{\ell_1}]\}\ge c_{\ref{interval detection lemma}}\gamma 2^{-\ell_1/8};
\;\;\P\{\xi\in[2^{\ell_2},2^{\ell_2+1}]\}\ge c_{\ref{interval detection lemma}}\gamma 2^{-\ell_2/8}.$$
Now, define $\lambda$ as the conditional expectation of $\xi$ given the event
$\mathcal M=\{\omega\in\Omega:\xi(\omega)\in[-2^{\ell_1+1},-2^{\ell_1}]\cup[2^{\ell_2},2^{\ell_2+1}]\}$, i.e.\
$$\lambda=\P(\mathcal M)^{-1}\int\limits_{\mathcal M}\xi(\omega)d\,\omega.$$
Let $H_1=-\lambda+[-2^{\ell_1+1},-2^{\ell_1}]$ and $H_2=-\lambda+[2^{\ell_2},2^{\ell_2+1}]$. Note that necessarily $\lambda\in[-2^{\ell_1+1},2^{\ell_2+1}]$, hence
$H_1,H_2\subset [-2^{\ell+2},2^{\ell+2}]$ for $\ell=\max(\ell_1,\ell_2)$. Obviously, $\dist(H_1,H_2)\ge 2^\ell$ and for $H=H_1\cup H_2$
$$\Exp\trunk{(\xi-\lambda)}{H}=\int\limits_{\{\xi-\lambda\in H\}}(\xi(\omega)-\lambda)d\,\omega
=\int\limits_{\mathcal M}(\xi(\omega)-\lambda)d\,\omega=0.$$
Finally,
\begin{align*}
\min\bigl(\P\{\xi-\lambda\in H_1\},\P\{\xi-\lambda\in H_2\}\bigr)
&=\min\bigl(\P\{\xi\in [-2^{\ell_1+1},-2^{\ell_1}]\},\P\{\xi\in [2^{\ell_2},2^{\ell_2+1}]\}\bigr)\\
&\ge c_{\ref{interval detection lemma}}\gamma 2^{-\ell/8}.
\end{align*}
\end{proof}

Let us recall a folklore estimate of the norm of a random matrix with bounded mean zero entries
(see, for example, \cite[Proposition~2.4]{RV_CONGRESS}):
\begin{lemma}\label{laplace transform lemma}
Let $W=(w_{ij})$ be an $N\times n$ ($N\ge n$) random matrix with i.i.d.\ mean zero entries; $R>0$ and assume that
$|w_{ij}|\le R$ a.s. Then for a universal constant $C_{\ref{laplace transform lemma}}>0$
$$\P\bigl\{\|W\|\ge C_{\ref{laplace transform lemma}}R\sqrt{N}\bigr\}\le\exp(-N).$$
\end{lemma}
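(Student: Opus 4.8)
The plan is to prove this by the standard $\varepsilon$-net argument combined with Hoeffding's inequality, so the argument is short. By homogeneity I may assume $R=1$ (replace $W$ by $W/R$). I will use the elementary fact recalled in Section~\ref{prelim sec} that $S^{n-1}$ and $S^{N-1}$ admit $(1/4)$-nets $\Net,\Net'$ with $|\Net|\le 12^{n}$ and $|\Net'|\le 12^{N}$, together with the well-known consequence that
$$\|W\|\le 2\max_{y\in\Net,\;x\in\Net'}\langle Wy,x\rangle,$$
proved by noting that if $\|W\|=\langle Wy_0,x_0\rangle$ for a fixed pair $y_0\in S^{n-1}$, $x_0\in S^{N-1}$, then replacing $y_0,x_0$ by the nearest net points perturbs the bilinear form by at most $\tfrac12\|W\|$.

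For a fixed pair $y\in S^{n-1}$, $x\in S^{N-1}$, the scalar $\langle Wy,x\rangle=\sum_{i=1}^{N}\sum_{j=1}^{n}w_{ij}x_iy_j$ is a sum of $nN$ independent random variables $w_{ij}x_iy_j$, each of mean zero --- this is where the hypothesis $\Exp w_{11}=0$ is used --- and each taking values in $[-|x_iy_j|,|x_iy_j|]$. Since $\sum_{i,j}(2|x_iy_j|)^2=4\|x\|^2\|y\|^2=4$, Hoeffding's inequality gives $\P\{|\langle Wy,x\rangle|\ge s\}\le 2\exp(-s^2/2)$ for all $s>0$. A union bound over the at most $12^{n+N}\le 12^{2N}$ pairs of net points (here $N\ge n$ is used) then shows that, for $s=C\sqrt N$ with $C$ a sufficiently large universal constant, $\P\{\max_{y\in\Net,\,x\in\Net'}|\langle Wy,x\rangle|\ge C\sqrt N\}\le 12^{2N}\cdot 2\exp(-C^2N/2)\le\exp(-N)$; combined with the displayed inequality this gives the claim with $C_{\ref{laplace transform lemma}}=2C$.

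I do not expect any genuine obstacle, as the statement is folklore; the only point worth emphasizing is that the mean-zero assumption is essential, since otherwise the deterministic part $\Exp\langle Wy,x\rangle=\Exp(w_{11})\sum_{i,j}x_iy_j$ can be of order $RN$ and would destroy the $\sqrt N$ scaling. (One could equally replace the appeal to Hoeffding by a direct estimate of the Laplace transform of $\langle Wy,x\rangle$, which explains the name of the lemma, with the same outcome.) The remaining work is just bookkeeping of the universal constants, which I omit here.
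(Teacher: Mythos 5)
Your proof is correct. The paper does not actually prove Lemma~\ref{laplace transform lemma}; it cites it as a folklore fact (referring to \cite[Proposition~2.4]{RV_CONGRESS}), and the argument there is essentially the one you give: a double $\varepsilon$-net over $S^{n-1}\times S^{N-1}$, a subgaussian (here Hoeffding) tail bound for each bilinear form $\langle Wy,x\rangle$ using $\sum_{i,j}(x_iy_j)^2=1$, and a union bound; the constants and the observation that the mean-zero hypothesis is what keeps the deterministic part from being of order $RN$ are exactly as you say.
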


The next lemma highlights a useful property of the vectors from $S^{n-1}\setminus S_a^{n-1}(\sqrt{N})$:

\begin{lemma}\label{spread explained}
For any integer $N\ge n\ge m\ge 1$ and any $y\in S^{n-1}\setminus S_a^{n-1}(\sqrt{N})$ there
is a set $J=J(y)\subset\{1,2,\dots,n\}$ such that $|J|\le m$, $\|y\chi_J\|\ge\frac{1}{2}\sqrt{\frac{m}{n}}$
and $\|y\chi_J\|_\infty\le \frac{1}{\lfloor N^{1/4}\rfloor}$.
\end{lemma}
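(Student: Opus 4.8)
The plan is to exploit the defining property of a vector $y\in S^{n-1}\setminus S_a^{n-1}(\sqrt N)$: for \emph{every} subset $J$ of cardinality at most $\lfloor\sqrt N\rfloor$ one has $\|y\chi_J\|<1/2$. We want, instead, a set $J$ of size at most $m$ that carries a non-negligible amount of the norm \emph{and} on which $y$ has small $\ell_\infty$-norm. The natural candidate is to take $J$ to be an index set capturing coordinates of ``medium'' size: neither the largest (which would violate the $\ell_\infty$ requirement) nor the smallest. Concretely, order the coordinates of $y$ by decreasing absolute value, discard the top $\lfloor N^{1/2}\rfloor$ (or a similar threshold ensuring the $\ell_\infty$ bound), and then select the next $m$ coordinates as $J$.

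The key steps, in order, are as follows. First, let $i_1,i_2,\dots,i_n$ be a rearrangement of $\{1,\dots,n\}$ with $|y_{i_1}|\ge|y_{i_2}|\ge\cdots\ge|y_{i_n}|$, and set $k=\lfloor N^{1/2}\rfloor$ (one may need $\lfloor N^{1/4}\rfloor^2\le k$; either works up to constants). Since $y\notin S_a^{n-1}(\sqrt N)$, the set $\{i_1,\dots,i_{k}\}$ of the $k$ largest coordinates satisfies $\sum_{t\le k}y_{i_t}^2<1/4$, so $\sum_{t>k}y_{i_t}^2>3/4$. Second, for the $\ell_\infty$ control: by monotonicity and $\sum_{t\le k}y_{i_t}^2<1/4$ we get $k\,y_{i_k}^2\le\sum_{t\le k}y_{i_t}^2<1/4$, hence $|y_{i_{k}}|<\frac{1}{2\sqrt k}\le\frac{1}{2\lfloor N^{1/4}\rfloor}\le\frac{1}{\lfloor N^{1/4}\rfloor}$, and \emph{a fortiori} $|y_{i_t}|\le\frac{1}{\lfloor N^{1/4}\rfloor}$ for all $t>k$. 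So any $J\subset\{i_{k+1},\dots,i_n\}$ automatically meets the $\ell_\infty$ requirement. Third, for the lower bound on $\|y\chi_J\|$: take $J=\{i_{k+1},\dots,i_{k+m}\}$ (the $m$ largest coordinates among those surviving the truncation). Because these are the largest of the tail, each $y_{i_t}^2$ with $k<t\le k+m$ dominates the average of $\{y_{i_s}^2:\,s>k\}$ over any $m$ of them; more simply, $\|y\chi_J\|^2=\sum_{t=k+1}^{k+m}y_{i_t}^2\ge\frac{m}{n-k}\sum_{t>k}y_{i_t}^2\ge\frac{m}{n}\cdot\frac34\ge\frac14\cdot\frac{m}{n}$, giving $\|y\chi_J\|\ge\frac12\sqrt{m/n}$. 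Finally, $|J|\le m$ by construction, and the hypotheses $N\ge n\ge m\ge1$ guarantee $k+m\le\sqrt N+n\le 2n$ so that the indices $i_{k+1},\dots,i_{k+m}$ are well-defined (if fewer than $m$ tail indices remain, i.e.\ $n-k<m$, then $k\ge n-m\ge$ a large quantity, but $k\le\sqrt N$ and $n\le N$ force this degenerate case into the range where one simply takes all remaining coordinates; the tail still has norm$^2>3/4\ge\frac14\cdot\frac{m}{n}$ since $m\le n$, and the $\ell_\infty$ bound is unaffected).

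The main obstacle is purely a bookkeeping one: reconciling the three thresholds $\sqrt N$ (the sparsity level defining $S_a^{n-1}(\sqrt N)$), $\lfloor N^{1/4}\rfloor$ (the required $\ell_\infty$ denominator), and $m$ (the target cardinality), and checking that the truncation index $k$ can be chosen with $\sqrt k\ge\lfloor N^{1/4}\rfloor$ while still having $k\le\lfloor\sqrt N\rfloor$ so that the ``not almost sparse'' hypothesis applies to the top-$k$ block. Since $\lfloor N^{1/4}\rfloor^2\le N^{1/2}$, the choice $k=\lfloor N^{1/2}\rfloor$ works; one should also verify the edge case $n-k<m$ separately as indicated above. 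Everything else is the elementary monotone-rearrangement estimate: the largest few coordinates of the tail carry at least their proportional share of the tail's energy, and the tail's energy is at least $3/4$.
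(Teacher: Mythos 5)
Your proof is correct and uses essentially the same strategy as the paper: isolate a large set of small coordinates using the fact that $y$ is not almost $\sqrt N$-sparse (that set carries energy at least $3/4$), then extract a size-$m$ subset carrying at least an $m/n$ fraction of that energy. The only cosmetic difference is that the paper defines the tail by the absolute threshold $|y_j|\le 1/\lfloor N^{1/4}\rfloor$ (making the $\ell_\infty$ bound automatic) and applies pigeonhole to a partition of it into $\lceil n/m\rceil$ blocks of size at most $m$, whereas you order the coordinates, remove the top $\lfloor N^{1/2}\rfloor$, and take the $m$ largest survivors, requiring the small extra step $|y_{i_k}|<1/(2\sqrt k)$ to recover the $\ell_\infty$ control.
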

\begin{proof}
Take any $N\ge n\ge m\ge 1$ and $y=(y_1,y_2,\dots,y_n)\in S^{n-1}\setminus S_a^{n-1}(\sqrt{N})$ and let
$$J'(y)=\Bigl\{j\in\{1,2,\dots,n\}:\,|y_j|\le \frac{1}{\lfloor N^{1/4}\rfloor}\Bigr\}.$$
Obviously, $|J'|\ge n-\sqrt{N}>0$ and, since $y$ is not almost $\sqrt{N}$-sparse,
$\|y\chi_{J'}\|\ge \sqrt{3/4}$. Let $\{J'_1,J'_2,\dots,J'_p\}$ be any partition of $J'$ into pairwise disjoint
subsets of cardinality at most $m$ with $p\le \lceil n/m\rceil$. Then, clearly, for some $q\in\{1,2,\dots,p\}$,
$\|y\chi_{J_q}\|\ge\|y\chi_{J'}\|/\sqrt{p}>\frac{1}{2}\sqrt{\frac{m}{n}}$.
Setting, $J(y)=J_q$, we get the result.
\end{proof}

\begin{prop}[The set $S^{n-1}\setminus S_a^{n-1}(\sqrt{N})$]\label{incompressible vectors lemma}
For any $\gamma>0,\delta>1$ there are $N_{\ref{incompressible vectors lemma}}\in\N$ and
$h_{\ref{incompressible vectors lemma}}>0$ depending only on $\gamma$ and $\delta$
with the following property:
Let $N\ge \max(N_{\ref{incompressible vectors lemma}}, \delta n)$ and let $A$ be an $N\times n$ random matrix
with i.i.d.\ entries such that
$$\min\bigl(\P\bigl\{z-\sqrt{N}\le a_{11}\le z-1\bigr\},
\P\bigl\{z+1\le a_{11}\le z+\sqrt{N}\bigr\}\bigr)\ge\gamma$$
for some $z\in\R$.
Then for any non-random $N\times n$ matrix $B$ and the set $S=S^{n-1}\setminus S_a^{n-1}(\sqrt{N})$ we have
$$\P\bigl\{\inf\limits_{y\in S}\|Ay+By\|\le h_{\ref{incompressible vectors lemma}}\sqrt{N}\bigr\}
\le \exp(-w_{\ref{wrap signum lemma}}N/2).$$
\end{prop}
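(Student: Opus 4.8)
The plan is to run the modified $\varepsilon$-net argument of Proposition~\ref{gen spl proc} with the ``regular/irregular'' splitting produced by Lemma~\ref{interval detection lemma}, building the net from the ``pieces'' of incompressible vectors supplied by Lemma~\ref{spread explained} and Lemma~\ref{net sparsification lemma}, and verifying the distance condition \eqref{gen spl cond 1} via Proposition~\ref{distance estimate}. (We may assume $S:=S^{n-1}\setminus S_a^{n-1}(\sqrt N)\neq\emptyset$, in particular $n\ge 1$.) First I would apply Lemma~\ref{interval detection lemma} to $\xi=a_{11}$, obtaining $\ell\in[0,\lfloor\log_2\sqrt N\rfloor]$, $\lambda\in\R$, disjoint $H_1,H_2\subset[-2^{\ell+2},2^{\ell+2}]$ with $d:=\dist(H_1,H_2)\ge 2^\ell$, $r:=\min\bigl(\P\{a_{11}-\lambda\in H_1\},\P\{a_{11}-\lambda\in H_2\}\bigr)\ge c_{\ref{interval detection lemma}}\gamma 2^{-\ell/8}$, and $\Exp\trunk{(a_{11}-\lambda)}{H}=0$ for $H=H_1\cup H_2$. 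Put $\tilde A=A-\lambda\mofones$, $\tilde B=B+\lambda\mofones$ (so $\tilde A+\tilde B=A+B$) and $\regm=\trunk{\tilde A}{H}$, $\irregm=\trunk{\tilde A}{\overline H}+\tilde B$, $D=\regm+\irregm=A+B$. Since $\regm$ has i.i.d.\ mean-zero entries bounded by $2^{\ell+2}$, Lemma~\ref{laplace transform lemma} gives $\|\regm\|\le C_{\ref{laplace transform lemma}}2^{\ell+2}\sqrt N$ off an event $\mathcal E_0$ with $\P(\Omega\setminus\mathcal E_0)\le\exp(-N)$.

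Next, fix a sparsity level $m\in\{1,\dots,n\}$ and a precision $\varepsilon\in(0,1]$ (to be chosen below) and let $T\subset B_2^n$ be the set of $m$-sparse $x$ with $\|x\|\ge\frac12\sqrt{m/n}$ and $\|x\|_\infty\le 1/\lfloor N^{1/4}\rfloor$. By Lemma~\ref{spread explained}, for each $y\in S$ the vector $y\chi_{J(y)}$ lies in $T$ and $y\chi_{\supp(y\chi_{J(y)})}=y\chi_{J(y)}$, so \eqref{former sparsif} holds; Lemma~\ref{net sparsification lemma} then yields a finite $\Net\subset T$ with $|\Net|\le(C_{\ref{net sparsification lemma}}n/(\varepsilon m))^m$ such that every $y\in S$ has some $y'\in\Net$ with $\|y\chi_{\supp y'}-y'\|\le\varepsilon$. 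With $E_{y'}:=\spn\{e_j:j\in\supp y'\}$ we have $\Proj_{E_{y'}}(y)=y\chi_{\supp y'}$, so conditions $1)$ and $3)$ of Proposition~\ref{gen spl proc} hold with this $\varepsilon$. For condition $2)$: each $y'\in\Net$ has $\|y'\|\ge t:=\frac12\sqrt{m/n}$, and if $m$ is chosen so large that $1/\lfloor N^{1/4}\rfloor\le 2h_{\ref{distance estimate}}/d$ (with $h_{\ref{distance estimate}}=\frac{1-\delta^{-1/4}}{C_{\ref{rogozin lemma}}}\sqrt{r/8}\,t\,d$ formed from these $d,r,t$), then Proposition~\ref{distance estimate}, applied to $\tilde A,\tilde B,H$ and to each $y'$ separately, gives $\P\bigl\{\dist(\regm y',\thesubspace{\tilde A}{\tilde B}{H}{E_{y'}})\le h_{\ref{wrap signum lemma}}h_{\ref{distance estimate}}\sqrt N\bigr\}\le 2\exp(-w_{\ref{wrap signum lemma}}N)$; note $\thesubspace{\tilde A}{\tilde B}{H}{E_{y'}}=D(E_{y'}^\perp)+\irregm(E_{y'})$. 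Intersecting the complements of these events over $y'\in\Net$ with $\mathcal E_0$ and invoking Proposition~\ref{gen spl proc} yields $\inf_{y\in S}\|Ay+By\|\ge h_{\ref{wrap signum lemma}}h_{\ref{distance estimate}}\sqrt N-\varepsilon\|\regm\|$ off an event of probability $\le 2|\Net|\exp(-w_{\ref{wrap signum lemma}}N)+\exp(-N)$.

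The heart of the matter is choosing $m$ and $\varepsilon$ so that, simultaneously, $\varepsilon\|\regm\|\le\frac12 h_{\ref{wrap signum lemma}}h_{\ref{distance estimate}}\sqrt N$, the error probability above is $\le\exp(-w_{\ref{wrap signum lemma}}N/2)$, and $h_{\ref{wrap signum lemma}}h_{\ref{distance estimate}}\sqrt N\ge h_{\ref{incompressible vectors lemma}}\sqrt N$ for a constant $h_{\ref{incompressible vectors lemma}}=h_{\ref{incompressible vectors lemma}}(\gamma,\delta)$. Up to factors depending only on $\delta$ one has $h_{\ref{distance estimate}}\asymp\sqrt{\gamma}\,(m/n)^{1/2}\,2^{15\ell/16}$ and $\|\regm\|\lesssim 2^{\ell}\sqrt N$, so $\varepsilon$ must be of order $2^{-\ell}$ (a constant when $\ell$ is bounded, polynomially small in $N$ when $\ell$ is near $\frac12\log_2 N$), while $h_{\ref{distance estimate}}\sqrt N\gtrsim\sqrt N$ forces $m\gtrsim n\,2^{-15\ell/8}$ and the $\ell_\infty$ requirement forces $m\gtrsim n\,2^{\ell/8}N^{-1/2}$. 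Taking $m$ of the order of $n\cdot\max\bigl(c_0(\gamma,\delta)\,2^{-15\ell/8},\ 2^{\ell/8}N^{-1/2}\bigr)$, rounded into $\{1,\dots,n\}$, with $c_0(\gamma,\delta)$ sufficiently small, balances everything: in the range of $\ell$ where the first term dominates, $m=c_0 2^{-15\ell/8}n$ makes $h_{\ref{distance estimate}}\sqrt N$ a fixed multiple of $\sqrt{\gamma c_0}\sqrt N$ and makes $m\log(C_{\ref{net sparsification lemma}}n/(\varepsilon m))\le\frac12 w_{\ref{wrap signum lemma}}\delta\,n$ once $c_0$ is small (the factor $2^{-15\ell/8}$ absorbs the $\ell$-growth of the logarithm coming from $\varepsilon\asymp 2^{-\ell}$), so $|\Net|\le\exp(\frac12 w_{\ref{wrap signum lemma}}N)$; in the complementary range $m\le nN^{-7/16}/(c_0\gamma)$ is polynomially smaller than $n$, $h_{\ref{distance estimate}}\sqrt N$ is still at least a constant multiple of $\sqrt N$, and $|\Net|\le\exp(O(N^{9/16}\log N))=\exp(o(N))$. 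Finally one takes $N_{\ref{incompressible vectors lemma}}=N_{\ref{incompressible vectors lemma}}(\gamma,\delta)$ large enough that $m\ge 1$, that $1/\lfloor N^{1/4}\rfloor$ is small enough for the $\ell_\infty$ check, and that the $o(N)$ and $\exp(-N)$ terms are dominated, and sets $h_{\ref{incompressible vectors lemma}}$ equal to $\frac12 h_{\ref{wrap signum lemma}}$ times the resulting uniform constant lower bound for $h_{\ref{distance estimate}}$. I expect this parameter balancing — in particular making $m$ depend correctly on the a priori unknown scale $\ell$ produced by Lemma~\ref{interval detection lemma} — to be the only real difficulty; the remaining steps are a direct combination of Lemmas~\ref{laplace transform lemma},~\ref{interval detection lemma},~\ref{spread explained},~\ref{net sparsification lemma} with Propositions~\ref{gen spl proc} and~\ref{distance estimate}.
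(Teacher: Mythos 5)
Your proposal is correct and follows essentially the same route as the paper's proof: produce $\ell,\lambda,H$ via Lemma~\ref{interval detection lemma}, bound $\|\trunk{\tilde A}{H}\|$ via Lemma~\ref{laplace transform lemma}, build the net from Lemmas~\ref{spread explained} and~\ref{net sparsification lemma}, verify condition \eqref{gen spl cond 1} through Proposition~\ref{distance estimate} (noting $\thesubspace{\tilde A}{\tilde B}{H}{E_{y'}}=D(E_{y'}^\perp)+\irregm(E_{y'})$), and conclude with Proposition~\ref{gen spl proc}. The only place you diverge from the paper is the explicit choice of sparsity level $m$: you propose $m\asymp n\max\bigl(c_0 2^{-15\ell/8},\,2^{\ell/8}N^{-1/2}\bigr)$ and handle the two regimes separately, whereas the paper takes the single formula $m=\lceil\tau_0 n 2^{-\ell/4}\rceil$, with $\tau_0(\gamma,\delta)$ the largest number in $(0,1]$ making $\bigl(\frac{16\sqrt 8 C_{\ref{net sparsification lemma}}C_{\ref{laplace transform lemma}}2^{s/2}}{h_{\ref{wrap signum lemma}}f_0\tau_0^{3/2}}\bigr)^{2^{-s/4}\tau_0}\le\exp(w_{\ref{wrap signum lemma}}/4)$ for all $s\ge 0$; this choice sits automatically above both of your lower bounds for $m$ (since $2^{-\ell/4}\ge 2^{-15\ell/8}$ always, and $\tau_0 2^{-\ell/4}\ge 2^{\ell/8}N^{-1/2}$ once $N\ge N_{\ref{incompressible vectors lemma}}$) and makes the net-cardinality bound $\exp(w_{\ref{wrap signum lemma}}N/2)$ hold uniformly in $\ell$ without a case split. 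Both parameter choices satisfy the same three constraints (lower bounds on $h_{\ref{distance estimate}}$ and on $2h_{\ref{distance estimate}}/d$, and an upper bound on $|\Net|$), so your balancing is valid; the paper's one-line formula is just cleaner to write down.
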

\begin{proof}
Fix any $\gamma>0$ and $\delta>1$. To make the notation more compact, denote
$f_0:=\frac{(1-\delta^{-1/4})\sqrt{c_{\ref{interval detection lemma}}\gamma}}{C_{\ref{rogozin lemma}}}$ and
let $\tau_0=\tau_0(\gamma,\delta)$ be the largest number in $(0,1]$ such that for all $s\ge 0$
$$\Bigl(\frac{16\sqrt{8}C_{\ref{net sparsification lemma}}C_{\ref{laplace transform lemma}}
2^{s/2}}{h_{\ref{wrap signum lemma}}f_0\tau_0^{3/2}}\Bigr)^{2^{-s/4}\tau_0}
\le\exp(w_{\ref{wrap signum lemma}}/4)$$
(it is not difficult to see that $\tau_0$ is well defined).
Then, take $N_{\ref{incompressible vectors lemma}}=N_{\ref{incompressible vectors lemma}}(\gamma,\delta)$
to be the smallest positive integer such that for all $N\ge N_{\ref{incompressible vectors lemma}}$
\begin{equation}\label{aux incomp 6238}
\frac{1}{\lfloor N^{1/4}\rfloor}\le
\frac{f_0\sqrt{\tau_0}}{4\sqrt{8}}N^{-3/16}
\;\;\mbox{and}\;\;
\frac{48\sqrt{8N}C_{\ref{net sparsification lemma}}C_{\ref{laplace transform lemma}}}
{h_{\ref{wrap signum lemma}}f_0\tau_0^{3/2}}\le \exp(w_{\ref{wrap signum lemma}}N/4).
\end{equation}
Let $N\ge N_{\ref{incompressible vectors lemma}}$, $N\ge\delta n$ and
let $A$ be an $N\times n$ random matrix with entries satisfying conditions of the lemma and $B$
be any non-random $N\times n$ matrix.

By Lemma~\ref{interval detection lemma}, there is an integer $\ell\in[0,\lfloor \log_2\sqrt{N}\rfloor]$,
$\lambda\in\R$ and disjoint Borel sets $H_1,H_2\subset [-2^{\ell+2},2^{\ell+2}]$ such that
$\dist(H_1,H_2)\ge 2^\ell$,
$\min\bigl(\P\{a_{11}-\lambda\in H_1\},\P\{a_{11}-\lambda\in H_2\}\bigr)\ge
c_{\ref{interval detection lemma}}\gamma 2^{-\ell/8}$
and
$\Exp\trunk{(a_{11}-\lambda)}{H}=0$ for $H=H_1\cup H_2$.
Denote $\tilde A=A-\lambda\mofones$, $\tilde B=B+\lambda\mofones$ and let
$$R:=2^{\ell+2},\;d:=2^\ell,\;r:=c_{\ref{interval detection lemma}}\gamma 2^{-\ell/8},\;
m:=\Bigl\lceil\frac{\tau_0 n}{2^{\ell/4}}\Bigr\rceil,\;t:=\frac{1}{2}\sqrt{\frac{m}{n}},\;
\varepsilon:=\frac{h_{\ref{wrap signum lemma}}h_{\ref{distance estimate}}}{2C_{\ref{laplace transform lemma}}R},$$
where $h_{\ref{distance estimate}}$ is defined as in Proposition~\ref{distance estimate}.
Assume that $S$ is non-empty and let $T\subset B_2^n$ consist of all $m$-sparse vectors $y\in B_2^n$ with $\|y\|\ge t$
and $\|y\|_\infty\le \frac{2h_{\ref{distance estimate}}}{d}$.
The first inequality in \eqref{aux incomp 6238} and a simple calculation show that
$\frac{1}{\lfloor N^{1/4}\rfloor}\le \frac{2h_{\ref{distance estimate}}}{d}$.
Hence, in view of Lemma~\ref{spread explained},
$T$ is non-empty and satisfies \eqref{former sparsif}. By Lemma~\ref{net sparsification lemma},
there is a finite subset $\Net\subset T$ of cardinality at most $\bigl(\frac{nC_{\ref{net sparsification lemma}}}{m\varepsilon}\bigr)^{m}$
such that for any $y\in S$ there is $y'=y'(y)\in\Net$ with $\|y\chi_{\supp y'}-y'\|\le \varepsilon$.

For each $y'\in\Net$ denote $E_{y'}=\spn\{e_j\}_{j\in\supp y'}$. By Proposition~\ref{distance estimate},
$$\P\bigl\{\dist\bigl(\trunk{\tilde A}{H}y',\thesubspace{\tilde A}{\tilde B}{H}{E_{y'}}\bigr)
\le h_{\ref{wrap signum lemma}}h_{\ref{distance estimate}}\sqrt{N}\bigr\}\le 2\exp(-w_{\ref{wrap signum lemma}} N).$$
Define an event
\begin{align*}
\mathcal E
=\bigl\{&\omega\in\Omega:\,
\dist\bigl(\trunk{\tilde A}{H}(\omega)y, \thesubspace{\tilde A}{\tilde B}{H}{E_{y'}}(\omega)\bigr)
> h_{\ref{wrap signum lemma}}h_{\ref{distance estimate}}\sqrt{N}\\
&\mbox{for all $y'\in\Net$ and $\|\trunk{\tilde A}{H}(\omega)\|\le C_{\ref{laplace transform lemma}}R\sqrt{N}$}\bigr\}.
\end{align*}
By the above probability estimates and Lemma~\ref{laplace transform lemma},
$$
\P(\mathcal E)\ge 1-\exp(-N)-2|\Net|\exp\bigl(-w_{\ref{wrap signum lemma}}N\bigr)
\ge 1-\exp(-N)-2\Bigl(\frac{C_{\ref{net sparsification lemma}}n}{m\varepsilon}\Bigr)^{m}\exp\bigl(-w_{\ref{wrap signum lemma}}N\bigr).
$$
Using the definition of $\varepsilon$, $m$, $\tau_0$ and the second inequality in \eqref{aux incomp 6238}, we can estimate the probability as
\begin{align*}
\P(\mathcal E)
&\ge 1-3\Bigl(\frac{8C_{\ref{net sparsification lemma}}C_{\ref{laplace transform lemma}}
2^{\ell+\ell/4}}{\tau_0 h_{\ref{wrap signum lemma}}h_{\ref{distance estimate}}}\Bigr)^{2^{-\ell/4}\tau_0 n+1}
\exp(-w_{\ref{wrap signum lemma}} N)\\
&\ge 1-3\Bigl(\frac{16\sqrt{8}C_{\ref{net sparsification lemma}}C_{\ref{laplace transform lemma}}
2^{\ell/2}}{h_{\ref{wrap signum lemma}}f_0\tau_0^{3/2}}\Bigr)^{2^{-\ell/4}\tau_0 n+1}
\exp(-w_{\ref{wrap signum lemma}} N)\\
&\ge 1-\exp(-w_{\ref{wrap signum lemma}} N/2).
\end{align*}

Take any $\omega\in\mathcal E$ and define $\regm=\trunk{\tilde A}{H}(\omega)$,
$\irregm=\trunk{\tilde A}{\overline H}(\omega)+\tilde B$, $D=A(\omega)+B(\omega)=\regm+\irregm$.
Then $\|\regm\|\le C_{\ref{laplace transform lemma}}R\sqrt{N}$ and for every $y'\in\Net$ we have
$$\dist\bigl(\regm y',D(E_{y'}^\perp)+\irregm(E_{y'})\bigr)
> h_{\ref{wrap signum lemma}}h_{\ref{distance estimate}}\sqrt{N}.$$
Hence, by Proposition~\ref{gen spl proc} and the definition of $\varepsilon$, we get
$$\inf\limits_{y\in S}\|Dy\|>h_{\ref{wrap signum lemma}}h_{\ref{distance estimate}}\sqrt{N}
-\varepsilon C_{\ref{laplace transform lemma}}R\sqrt{N}
= \frac{1}{2}h_{\ref{wrap signum lemma}}h_{\ref{distance estimate}}\sqrt{N}
\ge\frac{h_{\ref{wrap signum lemma}}f_0\sqrt{\tau_0}}{4\sqrt{8}}\,\sqrt{N}.$$
Finally, applying the above argument to entire set $\mathcal E$, we obtain the result.
\end{proof}

\begin{proof}[Proof of Theorem~\ref{ssv theor nonsym}]
In view of the trivial identity $\concf(a_{ij},\alpha)=\concf(a_{ij}/\alpha,1)$, 
it is enough to prove the theorem for $\alpha=1$. Fix any $\delta>0$ and $\beta>0$, let $\gamma=\beta/4$ and let
$N_0=N_0(\beta,\delta)$ be the smallest integer such that
$N_0\ge \max(N_{\ref{compressible lemma}}, N_{\ref{incompressible vectors lemma}})$ and for all $N\ge N_0$
$$N\le\exp(w_{\ref{peaky lemma}}N/2)\;\;\mbox{and}\;\;3\le \exp\bigl(\min(w_{\ref{peaky lemma}},w_{\ref{wrap signum lemma}})N/4\bigr).$$
Take any $N,n\in\N$ with $N\ge \max(N_0,\delta n)$, let $A=(a_{ij})$ be a $N\times n$ random matrix with i.i.d.\
entries satisfying $\concf(a_{11},1)\le 1-\beta$ and let $B$ be any non-random $N\times n$ matrix.
By the right-continuity of the cdf of $a_{11}$, there is $z\in\R$ such that
$$\P\{a_{11}\le z-1\}\ge\frac{\beta}{2}\;\mbox{ and }\;\P\{a_{11}<z-1\}\le\frac{\beta}{2}.$$
Then
$$\P\{a_{11}\ge z+1\}\ge 1-\P\{a_{11}<z-1\}-\concf(a_{11},1)\ge\frac{\beta}{2}.$$
Let us consider three cases.

{\bf 1)} $\P\{z+1\le a_{11}\le z+\sqrt{N}\}\le \gamma$. Then
$\concf(a_{11},\sqrt{N}/8)\le \concf(a_{11},(\sqrt{N}-1)/2)\le 1-\gamma$.
Obviously, any vector on $S^{n-1}$ is $N^{-1/2}$-peaky. Then, applying Proposition~\ref{peaky lemma}
with the ``scaling factor'' $\sqrt{N}/8$, we get
\begin{align*}
\P\bigl\{s_n(A+B)\le h_{\ref{peaky lemma}}\sqrt{N}/8\bigr\}
&=\P\bigl\{\inf\limits_{y\in S^{n-1}}\|Ay+By\|\le h_{\ref{peaky lemma}}\sqrt{N}/8\bigr\}\\
&\le n\exp(-w_{\ref{peaky lemma}}N)\\
&\le\exp(-w_{\ref{peaky lemma}}N/2).
\end{align*}

{\bf 2)} $\P\{z-\sqrt{N}\le a_{11}\le z-1\}\le\gamma$. Treated as above.

{\bf 3)} $\min\bigl(\P\{z-\sqrt{N}\le a_{11}\le z-1\},\P\{z+1\le a_{11}\le z+\sqrt{N}\}\bigr)\ge \gamma$.
Define $\theta_{\ref{compressible lemma}}$ as in Proposition~\ref{compressible lemma}. By Proposition~\ref{peaky lemma} for peaky vectors,
$$\P\bigl\{\inf\limits_{y\in S_p^{n-1}(\theta_{\ref{compressible lemma}})}\|Ay+By\|
\le h_{\ref{peaky lemma}}\theta_{\ref{compressible lemma}} \sqrt{N}\bigr\}\le n\exp(-w_{\ref{peaky lemma}}N)
\le\exp(-w_{\ref{peaky lemma}}N/2).$$
By Propositions~\ref{compressible lemma} and~\ref{incompressible vectors lemma} for
$S=S_a^{n-1}(\sqrt{N})\setminus S_p^{n-1}(\theta_{\ref{compressible lemma}})$ and
$S'=S^{n-1}\setminus S_a^{n-1}(\sqrt{N})$ we have
\begin{align*}
&\P\bigl\{\inf\limits_{y\in S}\|Ay+By\|\le h_{\ref{compressible lemma}}\sqrt{N}\bigr\}\le \exp(-w_{\ref{wrap signum lemma}}N/2);\\
&\P\bigl\{\inf\limits_{y\in S'}\|Ay+By\|\le h_{\ref{incompressible vectors lemma}}\sqrt{N}\bigr\}\le \exp(-w_{\ref{wrap signum lemma}}N/2).
\end{align*}
Combining the estimates, we get for
$h=\min\bigl(h_{\ref{peaky lemma}}\theta_{\ref{compressible lemma}},
h_{\ref{compressible lemma}},h_{\ref{incompressible vectors lemma}}\bigr)$:
\begin{align*}
\P\bigl\{s_n(A+B)\le h\sqrt{N}\bigr\}&\le \exp(-w_{\ref{peaky lemma}}N/2)+2\exp(-w_{\ref{wrap signum lemma}}N/2)\\
&\le\exp\bigl(-\min(w_{\ref{peaky lemma}},w_{\ref{wrap signum lemma}})N/4\bigr).
\end{align*}
This completes the proof.
\end{proof}

\section{Acknowledgement}

I would like to thank my supervisor Prof. N.~Tomczak-Jaegermann for valuable suggestions that helped improve structure of the proof.

\end{document}